\newtheorem{theorem}{Theorem}[section]
\newtheorem{lemma}[theorem]{Lemma}
\newtheorem{proposition}[theorem]{Proposition}
\newtheorem{corollary}[theorem]{Corollary}
\theoremstyle{definition}
\newtheorem{example}[theorem]{Example}
\theoremstyle{remark}
\newtheorem{remark}[theorem]{Remark}
\newtheorem{notation}[theorem]{Notation}
\numberwithin{equation}{section}
\DeclareMathOperator{\Aut}{Aut}
\DeclareMathOperator{\End}{End}
\DeclareMathOperator{\Fix}{Fix}
\DeclareMathOperator{\id}{id}
\DeclareMathOperator{\Inn}{Inn}
\DeclareMathOperator{\Perm}{Perm}
\newcommand{\ep}{\varepsilon}
\newcommand{\ka}{\kappa}
\newcommand{\si}{\sigma}
\newcommand{\ta}{\tau}
\newcommand{\ph}{\varphi}
\newcommand{\cK}{\mathcal{K}}
\newcommand{\cL}{\mathcal{L}}
\newcommand{\op}{\mathrm{op}}
\newcommand{\lexp}[2]{\null^{#2} \mkern-2mu #1}
\newcommand{\lexpp}[2]{\null^{#2} \mkern-2mu (#1)}
\begin{document}

\title[Hopf--Galois structures and skew braces]{On the connection between Hopf--Galois structures and skew braces}

\author{L.~Stefanello}

\address{Department of Mathematics,
          Universit\`a di Pisa,
          Largo Bruno Pontecorvo 5, 56127 Pisa, Italy}
        
\email{lorenzo.stefanello@phd.unipi.it}

\urladdr{https://people.dm.unipi.it/stefanello/}
\thanks{The first author was a member of GNSAGA (INdAM)}

\author{S.~Trappeniers}

\address{Department of Mathematics and Data Science, 
 Vrije Universiteit Brussel, 
 Pleinlaan 2, 
 1050 Brussels, Belgium} 

\email{senne.trappeniers@vub.be} 
\thanks{The second author was supported by Fonds voor Wetenschappelijk Onderzoek (Flanders), via an FWO Aspirant-fellowship, grant 1160522N}

\subjclass[2020] {Primary 12F10, 16T05, 20N99}

\keywords{Hopf--Galois structures, skew braces, Hopf--Galois correspondence}

\begin{abstract}
We present a different version of the well-known connection between Hopf--Galois structures and skew braces, building on a recent paper of A.~Koch and P.~J.~Truman. 

We show that the known results that involve this connection easily carry over to this new perspective, and that new ones naturally appear.

As an application, we present new insights on the study of the surjectivity of the Hopf--Galois correspondence, explaining in more detail the role of bi-skew braces in Hopf--Galois theory.
\end{abstract}

\maketitle

\section{Introduction}

Let $L/K$ be a finite extension of fields. A Hopf--Galois structure on $L/K$ consists of a $K$-Hopf algebra $H$ together with an action of $H$ on $L$ that satisfies certain technical conditions. When $L/K$ is Galois with Galois group $G$, the prototypical example consists of the group algebra $K[G]$ with the usual Galois action on $L$; indeed, the required properties for a Hopf--Galois structure mimic precisely those of this structure, which is called the classical structure. 

Hopf--Galois theory was initially introduced in the context of purely inseparable extensions by S.~U.~Chase and M.~E.~Sweedler~\cite{CS69}, but after it was mainly studied for separable extensions, providing a generalisation of classical Galois theory. In the particular case in which the extension is also Galois, Hopf--Galois structures have been shown to be extremely useful in dealing with problems in arithmetic. For example, as discussed by N.~P.~Byott in~\cite{Byo97}, there are situations in which the Galois module structure of an extension of $p$-adic fields can be better described in a Hopf--Galois structure different from the classical one; see~\cite{Chi00,CGKKKTU21} for a detailed analysis on the role of Hopf--Galois theory in local Galois module theory. 

A main role in the development of this theory was played by a groundbreaking result of C.~Greither and B.~Pareigis~\cite{GP87}. We assume that $L/K$ is Galois with Galois group $G$, which is the case of interest in the paper, underlining that the result can be stated also for separable non-normal extensions. Then there exists a bijective correspondence between Hopf--Galois structures on $L/K$ and regular subgroups of the permutation group $\Perm(G)$ of $G$ normalised by $\lambda(G)$, the image of $G$ under the left regular representation. For example, $\rho(G)$, the image of $G$ under the right regular representation, corresponds to the classical structure, while $\lambda(G)$ corresponds to the so-called canonical nonclassical structure, different from the classical one when $G$ is not abelian. We define the type of a Hopf--Galois structure to be the isomorphism class of the corresponding regular subgroup of $\Perm(G)$.

This result was followed by new approaches to the theory, and problems of existence and classification have been studied by several authors;  given a group $N$, does there exist a Hopf--Galois structure of type $N$ on $L/K$?  Can we classify and count the Hopf--Galois structures on $L/K$? A precise survey of the main results developed in the last years can be found in~\cite{CGKKKTU21}. 

A deep problem that can be approached with Greither--Pareigis theory regards the surjectivity of the Hopf--Galois correspondence. Given a Hopf--Galois structure on $L/K$ with $K$-Hopf algebra $H$, we can attach to each $K$-sub Hopf algebra of $H$ an intermediate field of $L/K$ in a natural way. The correspondence we get is called the Hopf--Galois correspondence, which can be shown to be injective~\cite{CS69} but not necessarily surjective. For example, if we consider the classical structure, then we recover the usual Galois correspondence, which is surjective. But it was proved in~\cite{GP87} that if we consider the canonical nonclassical structure, then the image of the Hopf--Galois correspondence consists precisely of the normal intermediate fields of $L/K$; this shows that if $G$ is Hamiltonian (that is, nonabelian with all the subgroups normal), then the Hopf--Galois correspondence is surjective, but as soon as the group is not abelian nor Hamiltonian, we find a Hopf--Galois structure for which the Hopf--Galois correspondence is not surjective. 

More generally, given a Hopf--Galois structure on $L/K$ with Hopf algebra $H$ corresponding  to a regular subgroup $N$ of $\Perm(G)$ normalised by $\lambda(G)$, we know that there exists a bijective correspondence between $K$-sub Hopf algebras of $H$ and subgroups of $N$ normalised by $\lambda(G)$; the first explicit proof of this fact can be found in~\cite[Proposition 2.2]{CRV16c}. As there always exists a bijective correspondence between intermediate fields of $L/K$ and subgroups of the Galois group $G$, we can translate the Hopf--Galois correspondence to find a correspondence between subgroups of $N$ normalised by $\lambda(G)$ and subgroups of $G$. This means that for groups of small order the problem can be approached from a quantitative point of view; in~\cite{KKTU19b}, the authors used GAP~\cite{GAP} to deal with groups of order $42$ and found some nonclassical Hopf--Galois structures for which the number of subgroups of the Galois group $G$ equals the number of subgroups of $N$ normalised by $\lambda(G)$, meaning that the Hopf--Galois correspondence for these structures is surjective.

A look in the literature seems to suggest that these cases are not really common. Beside these examples and the aforementioned classical structure and canonical nonclassical structure when $G$ is Hamiltonian, there exists only one other known class of Hopf--Galois structures for which the Hopf--Galois correspondence is surjective; this was obtained from the study of the connection between Hopf--Galois structures and skew braces, objects introduced by L.~Guarnieri and L.~Vendramin~\cite{GV17}, building on the pioneering work of W.~Rump~\cite{Rum07a}. Skew braces are related with several other topics, such as radical rings, solutions of the Yang--Baxter equation, and the holomorph of a group. In particular, they are connected with regular subgroups of permutation groups, and in this way, also with Hopf--Galois structures. This connection, which is not bijective, was initially suggested in~\cite{Bac16} and then made precise in the appendix of Byott and Vendramin in~\cite{SV18}.

Thanks to this connection, the problem of the surjectivity of the Hopf--Galois correspondence was translated into a different language by L.~N.~Childs~\cite{Chi17,Chi18}, who showed that given a Hopf--Galois structure on $L/K$ with Hopf algebra $H$, there exists a bijective correspondence between $K$-sub Hopf algebras of $H$ and certain substructures of the associated skew brace. In this way, Childs proved that for all the Hopf--Galois structures on a Galois extension with Galois group cyclic of odd prime power order, the Hopf--Galois correspondence is surjective. 

Despite this promising start, to the best knowledge of the authors, no further examples of this behaviour have been found. A new approach, introduced in~\cite{Chi21}, seems to suggest how difficult it is to find them. Namely, instead of looking for Hopf--Galois structures for which the Hopf--Galois correspondence is surjective, one can study the failure of the surjectivity. Given a Hopf--Galois structure on $L/K$ with Hopf algebra $H$, how far is the Hopf--Galois correspondence from being surjective? The idea is to compute (or estimate) the ratio between the number of $K$-sub Hopf algebras of $H$ and the number of intermediate fields of $L/K$, which was translated by Childs in a problem regarding just the associated skew brace.

 A possible explanation for the lack of new examples could be given by the fact that the substructures of skew braces studied by Childs, which seem to arise naturally from Hopf--Galois theory, are not the usual substructures considered in the theory of the skew braces, namely, left ideals, strong left ideals, and ideals. This issue was initially addressed by A.~Koch and P.~J.~Truman~\cite{KT20}, who considered the notion of opposite skew brace and showed that the substructures studied by Childs coincide with left ideals of the opposite skew brace. They moved the problem to a more familiar setting, and combined this observation with the results of~\cite{KKTU19b} to describe some known properties of Hopf--Galois structures in terms of the opposite skew brace. 

This intuition is at the very base of this paper, where we present a new version of the known connection between Hopf--Galois structures and skew braces, as per the following points:
\begin{enumerate}
	\item Use directly the opposite skew brace.
	\item Make the connection bijective. 
	\item Forget about the regular subgroup.
\end{enumerate}
The idea is that using this new point of view one can explicitly see how the knowledge of the structure of a skew brace gives useful and qualitative information for the associated Hopf--Galois structure. In particular, the role of bi-skew braces, certain skew braces introduced by Childs~\cite{Chi19} and then further studied by A.~Caranti~\cite{Car20} and the authors~\cite{ST23}, seems to appear in a more transparent way from this new perspective, for example in order to find Hopf--Galois structures for which the Hopf--Galois correspondence is surjective.

The paper is organised as follows. In section~\ref{sec: hopfgalois skewbraces}, we introduce the necessary preliminaries on Hopf--Galois structures, skew braces, and their connections. We also recall the tool of Galois descent, useful throughout the rest of the paper. In section~\ref{sec: main}, we explicitly describe the new connection we propose. We remark how the known advantages of the usual connection still apply in the new perspective, and we see how some old and new results can be explained and derived.
In section~\ref{sec: examples}, we use the new point of view to deal with the Hopf--Galois correspondence. In particular, we present new qualitative results, examples, and statements to explain why, in some situations, the Hopf--Galois correspondence is surjective, from a more general perspective. A main role here is played by bi-skew braces.

\section{Preliminaries}\label{sec: hopfgalois skewbraces}

\subsection{Hopf--Galois structures}

Let $L/K$ be a finite extension of fields. A \emph{Hopf--Galois structure} on $L/K$ consists of a $K$-Hopf algebra $H$ together with an action $\star$ of $H$ on $L$ such that $L$ is an $H$-module algebra and the $K$-linear map
			\begin{equation*}
				L\otimes_K H\to \End_K(L),\quad x\otimes h\mapsto (y\mapsto x(h\star y))
			\end{equation*}
			is bijective. (We remark that two isomorphic $K$-Hopf algebras whose action on $L$ respect the isomorphism give the same structure). For more insights on the definition, we refer to~\cite{Chi00}.
					
For example, when $L/K$ is Galois with Galois group $G$, the \emph{classical structure} consists of the group algebra $K[G]$ together with the usual Galois action. 

Following~\cite{CS69}, given a Hopf--Galois structure on $L/K$ with $K$-Hopf algebra $H$, we can attach to each $K$-sub Hopf algebra $H'$ of $H$ an intermediate field $F$ of $L/K$, as follows:
\begin{equation*}
	F=L^{H'}=\{x\in L\mid h'\star x=\ep(h')x \text{ for all $h'\in H'$}\},
\end{equation*}
where $\ep$ denotes the counit of $H'$. We obtain in this way the \emph{Hopf--Galois correspondence}, which is always injective. We remark that the $F$-Hopf algebra $F\otimes_K H'$ acts on $L$ naturally and gives a Hopf--Galois structure on $L/F$, and in particular, $[L:F]$ equals the dimension of $H'$ as $K$-vector space; see also~\cite[section 7]{CGKKKTU21} for more details.

A $K$-sub Hopf algebra $H'$ of $H$ is \emph{normal} if for all $h\in H$ and $h'\in H'$,
\begin{align*}
\sum_{(h)}h_{(1)}h'S(h_{(2)})\in H',\quad 	\sum_{(h)}S(h_{(1)})h'h_{(2)}\in H',
\end{align*}
where, in Sweedler's notation, the image of $h$ under the comultiplication $\Delta$ of $H$ is $\Delta(h)=\sum_{(h)}h_{(1)}\otimes h_{(2)}$, and $S$ denotes the antipode of $H$.
If $H'$ is a normal $K$-sub Hopf algebra of $H$, then by~\cite[Lemma 3.4.2 and Proposition 3.4.3]{Mon93} there exists a short exact sequence
\begin{equation*}
	K\to H'\to H\to \overline{H}\to K
\end{equation*}
of $K$-Hopf algebras, in the sense of~\cite[Proposition 1.2.3]{AD95b}. Moreover, if $F=L^{H'}$, then the action of $H$ on $L$ yields an action of $\overline{H}$ on $F$ which gives a Hopf--Galois structure on $F/K$; see~\cite[Lemma 4.1]{Byo02}.

We recall that $h\in H$ is a \emph{grouplike element} if $\Delta(h)=h\otimes h$.

One fundamental tool in this theory is given by Galois descent; we briefly recall it here for the convenience of the reader, summarising~\cite[section 2.12]{Chi00}. Suppose that $L/K$ is Galois with Galois group $G$. Given an $L$-Hopf algebra $M$ on which $G$ acts semilinearly, we say that $M$ is \emph{$G$-compatible} if all the maps defining the structure of $M$ as an $L$-Hopf algebra are $G$-equivariant. (Here $G$ acts on $L$ via Galois action and on $M\otimes_L M$ diagonally.) 

Denote by $\cK$ the category of $K$-Hopf algebras, where morphisms are $K$-Hopf algebra homomorphisms, and by $\cL$ the category of $G$-compatible $L$-Hopf algebras, where morphisms are $G$-equivariant $L$-Hopf algebra homomorphisms. Then there exists an equivalence of categories between $\cK$ and $\cL$, as follows:
\begin{itemize}
	\item If $H\in \cK$, then $L\otimes_K H\in \cL$, where here $G$ acts on the first factor of the tensor product; given a morphism $\ph\colon H_1\to H_2$ in $\cK$, we have that $\id\otimes\ph\colon L\otimes_K H_1\to L\otimes_K H_2$
		is a morphism in $\cL$.
	\item If $M\in \cL$, then $M^G=\{m\in M\mid G\text{ acts trivially on $m$}\}\in\cK$; given a morphism $\psi\colon M_1\to M_2$ in $\cL$, the restriction of $\psi$ to $M_1^G$ is a morphism $M_1^G\to M_2^G$. 
	\item If $H\in\cK$, then
\begin{equation*}
	H\to (L\otimes_K H)^G,\quad h\mapsto 1\otimes h
\end{equation*}
 is an isomorphism in $\cK$, and if $M\in \cL$, then
 \begin{equation*}
 	L\otimes_K M^G\to M,\quad l\otimes m\mapsto lm
 \end{equation*}
 is an isomorphism in $\cL$.
\end{itemize} 
 We immediately derive some consequences:
 \begin{itemize}
 	\item Let $M\in \cL$. Then there exists a bijective correspondence between $K$-sub Hopf algebras of $M^G$ and $L$-sub Hopf algebras of $M$ which are invariant under the action of $G$ on $M$. Explicitly, given such an $L$-sub Hopf algebra $M'$, the corresponding $K$-sub Hopf algebra is $(M')^G$, and $M'$ is normal in $M$ if and only if $(M')^G$ is normal in $M^G$.  
 	\item Let
 		\begin{equation*}
	L\to A\to M \to B\to L
\end{equation*}
be a short exact sequence of $L$-Hopf algebras. If all the $L$-Hopf algebras are $G$-compatible and all the $L$-Hopf algebra homomorphisms are $G$-equivariant, then
 \begin{equation*}
	K\to A^G\to M^G \to B^G\to K
\end{equation*}
is a short exact sequence of $K$-Hopf algebras. 
\item For all $M_1,M_2\in \cL$, we have that $(M_1\otimes_L M_2)^G$ and  $M_1^G\otimes_K M_2^G$ are isomorphic as $K$-Hopf algebras.
\item Let $M\in \cL$, and take $h\in M^G$. Then $h$ is a grouplike element of $M^G$ if and only if $h$ is a grouplike element of $M$.
 \end{itemize}

\begin{example}
	Let $N$ be a finite group on which $G$ acts via automorphisms, and extend this to an action of $G$ on $L[N]$, where $G$ acts on $L$ via Galois action. Then it is straightforward to check that $L[N]\in\cL$.
	Here the $L$-sub Hopf algebras of $L[N]$ are the group algebras $L[N']$ for subgroups $N'$ of $N$ (see~\cite[Proposition 2.1]{CRV16c}), and almost by definition, $L[N']$ is normal in $L[N]$ if and only if $N'$ is normal in $N$. We deduce that the $K$-sub Hopf algebras of $L[N]^G$ are of the form $L[N']^G$ for subgroups $N'$ of $N$ invariant under the action of $G$, and $L[N']^G$ is normal in $L[N]^G$ if and only if $N'$ is normal in $N$. Note that moreover, the lattices of $K$-sub Hopf algebras of $L[N]^G$ and subgroups of $N$ invariant under the action of $G$, with the usual binary operations, are isomorphic.
	 
	 If $N'$ is a normal subgroup of $N$ invariant under the action of $G$, then, by~\cite[Proposition 4.14]{Chi00}, 
	 \begin{equation*}
	L\to L[N']\to L[N] \to L[N/N']\to L
\end{equation*} is a short exact sequence of $L$-Hopf algebras which are $G$-compatible, where all the $L$-Hopf algebra homomorphisms are $G$-equivariant, so
	  \begin{equation*}
	K\to L[N']^G\to L[N]^G \to L[N/N']^G\to K
\end{equation*}
is a short exact sequence of $K$-Hopf algebras.

Finally, as the grouplike elements of $L[N]$ are the elements of $N$, we find that the grouplike elements of $L[N]^G$ are the elements of $N$ on which $G$ acts trivially. 
\end{example}

We conclude by mentioning~\cite[Theorem 2.1]{GP87}, whose proof heavily relies on Galois descent, which gives a description of the $K$-Hopf algebras arising in this theory.  Recall that a subgroup $N$ of $\Perm(G)$ is \emph{regular} if the map
\begin{equation*}
	N\to G,\quad \eta\mapsto \eta[1]
\end{equation*}
is a bijection. For example, $\lambda(G)$ and $\rho(G)$ are regular subgroups of $\Perm(G)$, where for all $\sigma,\tau\in G$,
\begin{align*}
	\lambda(\sigma)[\tau]=\sigma \tau,\quad \rho(\sigma)[\tau]=\tau\sigma^{-1}.
\end{align*}

Then there exists a bijective correspondence between Hopf--Galois structures on $L/K$ and regular subgroups of $\Perm(G)$ normalised by $\lambda(G)$; explicitly, if $N$ is such a subgroup, then we can consider the $L$-Hopf algebra $L[N]$, where $G$ acts on $L$ via Galois action and on $N$ via conjugation by $\lambda(G)$, and then via Galois descent take the $K$-Hopf algebra $L[N]^G$, which gives a Hopf--Galois structure on $L/K$ with the following action on $L$:
\begin{equation*}
	\left(\sum_{\eta\in N}a_{\eta}\eta\right) \star x=\sum_{\eta\in N}a_{\eta} (\eta^{-1}[1])(x). 
\end{equation*} 
As already mentioned, $\rho(G)$ corresponds to the classical structure, while $\lambda(G)$ corresponds to the so-called \emph{canonical nonclassical structure}. We say that the \emph{type} of a Hopf--Galois structure is the isomorphism class of the corresponding regular subgroup $N$ of $\Perm(G)$.

\subsection{Skew braces}
A \emph{skew (left) brace} is a triple $(G,\cdot,\circ)$, where $(G,\cdot)$ and $(G,\circ)$ are groups and the following property holds: for all $\si,\ta,\ka\in G$,
	\begin{equation*}
	\si\circ(\ta\cdot \ka)=(\sigma\circ \ta)\cdot \si^{-1}\cdot (\si\circ \ka),
\end{equation*}
where $\si^{-1}$ denotes the inverse of $\si$ in $(G,\cdot)$. We denote by $\overline{\si}$ the inverse of $\si$ with respect to $(G,\circ)$. It is easy to prove that for a skew brace $(G,\cdot,\circ)$, the identities of $(G,\cdot)$ and $(G,\circ)$ coincide. The \emph{order} of a skew brace is the cardinality of the underlying set $G$.  

For example, given a group $(G,\circ)$, we have that $(G,\circ,\circ)$ is a skew brace, which is said to be \emph{trivial}.  Similarly, if we define $\si\circ_{\op} \ta=\ta\circ \si$, then $(G,\circ_{\op},\circ)$ is a skew brace, which is said to be \emph{almost trivial}.
More generally, if $(G,\cdot,\circ)$ is a skew brace, then also $(G,\cdot_{\op},\circ)$ is a skew brace, called the \emph{opposite skew brace} of $(G,\cdot,\circ)$.
\begin{notation}
    Given a skew brace $(G,\cdot,\circ)$, if we want to apply a group theoretical construction with respect to one of the group operations, then we write  the operation as subscript, to avoid ambiguity. For example, we write $\iota_{\cdot}(\si)$ to denote conjugation by $\si$ in $(G,\cdot)$, for $\si\in G$.
\end{notation}

With each element $\si$ of a skew brace $(G,\cdot,\circ)$ we can associate the bijective map 
\begin{equation*}
    \gamma(\si)\colon G\to G, \quad \ta\mapsto \lexp{\ta}{\gamma(\si)}=\si^{-1}\cdot (\si\circ \ta).
\end{equation*}
This yields a group homomorphism
\begin{equation*}
    \gamma\colon (G,\circ)\to \Aut(G,\cdot);
\end{equation*} see~\cite[Proposition 1.9]{GV17}. The map $\gamma$, which is called the called the \emph{gamma function} of $(G,\cdot,\circ)$, gives an action of $(G,\circ)$ on $(G,\cdot)$ via automorphisms.  

For example, the gamma function of $(G,\cdot,\cdot)$ is given by $\gamma(\si)=\id$. Also, if $\gamma$ is the gamma function of a skew brace $(G,\cdot,\circ)$, then the gamma function of $(G,\cdot_{\op},\circ)$ is given by $\iota_{\cdot}(\si)\gamma(\si)$, as an easy computation shows.

Consider two skew braces $(G_1,\cdot,\circ)$ and $(G_2,\cdot,\circ)$. A \emph{skew brace homomorphism} is a map $f\colon G_1\to G_2$ such that $f(\si\cdot \ta)=f(\si)\cdot f(\ta)$ and $f(\si\circ \ta)=f(\si)\circ f(\ta)$ for all $\si,\ta\in G_1$. \emph{Skew brace isomorphisms} and \emph{automorphisms} are defined accordingly, and we denote by $\Aut(G,\cdot,\circ)$ the group of skew brace automorphisms of $(G,\cdot,\circ)$. 

Let $(G,\cdot,\circ)$ be a skew brace. A \emph{left ideal} of $(G,\cdot,\circ)$ is a subgroup $G'$ of $(G,\cdot)$ that is invariant under the action of $(G,\circ)$ via the gamma function $\gamma$ of $(G,\cdot,\circ)$. Note that this immediately implies that $G'$ is also a subgroup of $(G,\circ)$, so $(G',\cdot,\circ)$ is a skew brace, and that actually we can also replace \say{subgroup of $(G,\cdot)$} with \say{subgroup of $(G,\circ)$} in the definition. If $G'$ is normal in $(G,\cdot)$, then we say that $G'$ is a \emph{strong left ideal}; if $G'$ is also normal in $(G,\circ)$, then we say that $G'$ is an \emph{ideal}. In this last case, the quotient $(G/G',\cdot,\circ)$ is a skew brace in a natural way. 

For example,
\begin{equation*}
	\Fix(G,\cdot,\circ)=\{\ta\in G\mid \lexp{\ta}{\gamma(\si)}=\ta\text{ for all $\si\in G$}\}
\end{equation*} is a left ideal of $(G,\cdot,\circ)$; see~\cite[Proposition 1.6]{CSV19}. 

It is clear that the characteristic subgroups of $(G,\cdot)$ are strong left ideals of $(G,\cdot,\circ)$. More generally, the strong left ideals of $(G,\cdot,\circ)$ are precisely the left ideals of $(G,\cdot,\circ)$ which are also left ideals of $(G,\cdot_{\op},\circ)$, because of the description of the gamma function of $(G,\cdot_{\op},\circ)$. 

A skew brace $(G,\cdot,\circ)$ is \emph{metatrivial} if there exists an ideal $G'$ of $(G,\cdot,\circ)$ such that $(G',\cdot,\circ)$ and $(G/G',\cdot,\circ)$ are trivial skew braces. For example, by~\cite[Theorem 2.12]{BNY22}, all the skew braces that can be obtained with~\cite[Theorem 6.6]{ST23} are metatrivial. 

Let $(G_1,\cdot,\circ)$ and $(G_2,\cdot,\circ)$ be skew braces. Following~\cite{SV18}, given a group homomorphism
\begin{equation*}
	\alpha\colon (G_2,\circ)\to \Aut(G_1,\cdot,\circ),
\end{equation*}
we can define a \emph{semidirect product} of $(G_1,\cdot,\circ)$ and $(G_2,\cdot,\circ)$  to be the skew brace $(G,\cdot,\circ)$, where $G=G_1\times G_2$ as set, with $(G,\cdot)=(G_1,\cdot)\times(G_2,\cdot)$ and $(G,\circ)=(G_1,\circ)\rtimes(G_2,\circ)$, where the semidirect product is taken with respect to $\alpha$. When $\alpha$ is the trivial group homomorphism, we find the \emph{direct product} of skew braces, 
\begin{equation*}
    (G_1,\cdot,\circ)\times (G_2,\cdot,\circ).
\end{equation*}
We can generalise the notion of direct product to any finite number of skew braces.  
Note that the gamma function of the direct product of skew braces $(G_i,\cdot,\circ)$ is given by the gamma functions of the skew braces $(G_i,\cdot,\circ)$ in the obvious way.

If $(G,\cdot,\circ)$ is a skew brace isomorphic to a semidirect product of skew braces, then there exist an ideal $G_1$ and a strong left ideal $G_2$ of $(G,\cdot,\circ)$ such that $(G,\circ)$ is the inner semidirect product of $(G_1,\circ)$ and $(G_2,\circ)$, and $(G,\cdot)$ is the inner direct product of $(G_1,\cdot)$ and $(G_2,\cdot)$. When the semidirect product is a direct product, also $G_2$ is an ideal of $(G,\cdot,\circ)$.

Finally, a \emph{bi-skew brace} is a skew brace $(G,\cdot,\circ)$ such that also $(G,\circ,\cdot)$ is a skew brace. If $(G,\cdot,\circ)$ is a bi-skew brace and $\gamma$ is the gamma function of $(G,\cdot,\circ)$, then the gamma function $\gamma'$ of $(G,\circ,\cdot)$ is given by $\gamma'(\si)=\gamma(\si)^{-1}=\gamma(\overline{\si})$; see~\cite[section 3]{Car20}. By~\cite[table at page 1175]{CCDC20}, a skew brace is a bi-skew brace if and only if its gamma function has values in $\Aut(G,\circ)$. If $(G,\cdot,\circ)$ is a bi-skew brace, then the left ideals of $(G,\cdot,\circ)$ and $(G,\circ,\cdot)$ coincide; see~\cite[Lemma 3.1]{ST23}.

\subsection{Hopf--Galois structures and skew braces}

We recall the well-known connection between Hopf--Galois structures and skew braces. While it was originally developed in the appendix of Byott and Vendramin in~\cite{SV18}, we present here an equivalent version, which does not involve explicitly the holomorph, as described in~\cite[Proposition 2.1]{NZ19} (see also~\cite[section 2.8]{CGKKKTU21}). This is based on the following result, which is a slight reformulation of~\cite[Theorem 4.2]{GV17}. 
\begin{theorem}\label{theorem: old connection}
	Let $(G,\cdot)$ and $(G,\circ)$ be groups with the same identity. Then $(G,\cdot,\circ)$ is a skew brace if and only if $\lambda_{\cdot}(G)$ is normalised by $\lambda_{\circ}(G)$ in $\Perm(G)$. 
\end{theorem}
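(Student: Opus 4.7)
The plan is to unpack the normaliser condition into a functional equation and recognise it as the skew brace identity, via a single explicit computation inside $\Perm(G)$.

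First, I would fix $\sigma, \tau \in G$ and compute the conjugate permutation $\lambda_{\circ}(\sigma)\lambda_{\cdot}(\tau)\lambda_{\circ}(\sigma)^{-1}$ on an arbitrary element $\kappa \in G$. Using that $\lambda_{\circ}(\sigma)^{-1} = \lambda_{\circ}(\overline{\sigma})$, this evaluates to
\[
\bigl(\lambda_{\circ}(\sigma)\lambda_{\cdot}(\tau)\lambda_{\circ}(\sigma)^{-1}\bigr)[\kappa] = \sigma \circ \bigl(\tau \cdot (\overline{\sigma} \circ \kappa)\bigr).
\]
The condition that $\lambda_{\cdot}(G)$ is normalised by $\lambda_{\circ}(G)$ is equivalent to the existence, for each pair $(\sigma, \tau)$, of an element $\mu \in G$ depending only on $\sigma$ and $\tau$ such that the above expression equals $\mu \cdot \kappa$ for every $\kappa$.

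Next, after the change of variable $\rho = \overline{\sigma} \circ \kappa$ (so that $\kappa = \sigma \circ \rho$ ranges over all of $G$), the normalisation condition reads
\[
\sigma \circ (\tau \cdot \rho) = \mu \cdot (\sigma \circ \rho) \quad \text{for all } \rho \in G.
\]
Specialising to $\rho = 1$, which is legitimate since $(G, \cdot)$ and $(G, \circ)$ share the same identity, pins down $\mu = (\sigma \circ \tau) \cdot \sigma^{-1}$. Substituting this value back yields
\[
\sigma \circ (\tau \cdot \rho) = (\sigma \circ \tau) \cdot \sigma^{-1} \cdot (\sigma \circ \rho),
\]
which is precisely the skew brace axiom. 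Because every step is reversible, both directions of the biconditional are settled by this single calculation.

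There is no serious obstacle: the argument is a chain of rewrites. The only point that deserves explicit attention is the logical content of the normaliser condition, namely that $\mu$ must be independent of $\kappa$. That independence is exactly what promotes the pointwise equality into a universal identity in $\sigma, \tau, \rho$, and hence into the defining identity of a skew brace.
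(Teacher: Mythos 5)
Your computation is correct and complete: the conjugation formula, the change of variable, and the specialisation at $\rho=1$ (which uses exactly the shared-identity hypothesis) together establish both directions. The paper itself gives no proof of this statement, citing it as a reformulation of \cite[Theorem 4.2]{GV17}; your argument is precisely the direct verification underlying that result, so there is nothing to reconcile.
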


Let $L/K$ be a finite Galois extension of fields with Galois group $(G,\circ)$. 
\begin{itemize}
	\item Consider a Hopf--Galois structure on $L/K$, corresponding to a regular subgroup $N$ of $\Perm(G)$ normalised by $\lambda_{\circ}(G)$. We can use the bijection
\begin{equation*}
	N\to G,\quad \eta\mapsto \eta[1]
\end{equation*}
to transport the group structure of $N$ to $G$. In this way, we find a group structure $(G,\cdot)$ for which it is immediate to show that $\lambda_{\cdot}(G)=N$. By Theorem~\ref{theorem: old connection}, we conclude that $(G,\cdot,\circ)$ is a skew brace.
\item Let $(A,\cdot,\circ)$ be a skew brace with $(A,\circ)\cong (G,\circ)$. Use this bijection to transport the structure of $(A,\cdot)$ to $G$, to find a skew brace $(G,\cdot,\circ)$ isomorphic to $(A,\cdot,\circ)$.  By Theorem~\ref{theorem: old connection}, we have that $N=\lambda_{\cdot}(G)$ is normalised by $\lambda_{\circ}(G)$, so we obtain a Hopf--Galois structure on $L/K$.
\end{itemize}

\begin{example}\label{example: peculiar}
	Peculiarly, under this connection, the classical structure yields the almost trivial skew brace. On the other hand, the trivial skew brace is obtained by the canonical nonclassical structure. 
\end{example}

We immediately state an important and well-known consequence.
\begin{theorem}\label{theorem: quantitative}
	Let $N$ and $G$ be finite groups. Then the following are equivalent:
	\begin{itemize}
		\item There exists a skew brace $(A,\cdot,\circ)$ with $(A,\cdot)\cong N$ and $(A,\circ)\cong G$.
		\item There exists a Hopf--Galois structure of type $N$ on every Galois extension of fields with Galois group isomorphic to $G$. 
	\end{itemize}
\end{theorem}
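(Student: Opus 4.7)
The plan is to derive both implications directly from the two bullet points immediately preceding the statement, which spell out the passage between skew braces and Hopf--Galois structures via Theorem~\ref{theorem: old connection}. The only observation that needs emphasis is that the isomorphism type of a regular subgroup $N\subseteq\Perm(G)$ agrees with the type of the multiplicative group $(G,\cdot)$ obtained by transporting along the bijection $\eta\mapsto\eta[1]$; this is because $\lambda_{\cdot}\colon (G,\cdot)\to \lambda_{\cdot}(G)$ is an isomorphism of groups.

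For the forward direction, suppose $(A,\cdot,\circ)$ is a skew brace with $(A,\cdot)\cong N$ and $(A,\circ)\cong G$, and let $L/K$ be any Galois extension with Galois group $(G',\circ)\cong G$. Identifying $(G',\circ)$ with $(A,\circ)$ transports $\cdot$ to $G'$ and produces a skew brace $(G',\cdot,\circ)$ with $(G',\cdot)\cong N$. By Theorem~\ref{theorem: old connection}, $\lambda_{\cdot}(G')$ is a regular subgroup of $\Perm(G')$ normalised by $\lambda_{\circ}(G')$, and its isomorphism type is that of $(G',\cdot)$, hence of $N$. Greither--Pareigis then yields a Hopf--Galois structure of type $N$ on $L/K$, exactly as described in the first bullet after Theorem~\ref{theorem: old connection}.

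For the reverse direction, we first note that for any finite group $G$ a Galois extension with Galois group isomorphic to $G$ exists; for example, let $F$ be any field, set $L=F(x_\sigma:\sigma\in G)$ with $G$ acting by permuting the variables via the left regular representation, and take $K=L^G$. Applying the hypothesis to such an extension, realised with Galois group $(G,\circ)$, we obtain a regular subgroup $N'\subseteq\Perm(G)$ normalised by $\lambda_{\circ}(G)$ with $N'\cong N$. Transporting the group structure of $N'$ along $N'\to G$, $\eta\mapsto\eta[1]$, yields a group law $\cdot$ on $G$ with $\lambda_{\cdot}(G)=N'$ and $(G,\cdot)\cong N$. Theorem~\ref{theorem: old connection} then ensures that $(G,\cdot,\circ)$ is a skew brace, furnishing the desired $(A,\cdot,\circ)$.

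There is no real obstacle here beyond carefully tracking on which side the type $N$ appears; the whole argument is essentially a direct unpacking of the correspondence summarised by the two bullet points above, together with the elementary observation that every finite group is realisable as a Galois group over some field.
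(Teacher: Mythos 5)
Your proposal is correct and follows essentially the same route the paper intends: the theorem is stated as an immediate consequence of the two bullet points built on Theorem~\ref{theorem: old connection} together with Greither--Pareigis, which is exactly what you unpack (the paper gives no explicit proof, noting only that it is well known and also follows from Theorem~\ref{theorem: main}). Your added remarks --- that $\lambda_{\cdot}$ identifies $(G,\cdot)$ with the regular subgroup, so the type matches, and that some Galois extension realising $G$ exists so the reverse implication is not vacuous --- are accurate and fill in the small details the paper leaves implicit.
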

\begin{remark}
    A bi-skew brace $(A,\cdot,\circ)$ of finite order yields not only a Hopf--Galois structure of type $(A,\cdot)$ on every Galois extension of fields with Galois group isomorphic $(A,\circ)$, but also a Hopf--Galois structure of type $(A,\circ)$ on every Galois extension of fields with Galois group isomorphic to $(A,\cdot)$.
\end{remark}

We underline that the previous connection is not bijective, as distinct Hopf--Galois structures can correspond to isomorphic skew braces. This was precisely quantified in~\cite[Corollary 2.4]{NZ19}; see also~\cite[Corollary 3.1]{KT22} and section~\ref{sec: main}. 
However, there is a way to obtain from this connection a bijective correspondence. Indeed, as a consequence of Theorem~\ref{theorem: old connection} (see~\cite[section 7]{CS22a-p}), given a group $(G,\circ)$, there exists a bijective correspondence between group operations $\cdot$  such that $(G,\cdot,\circ)$ is a skew brace and  regular subgroups of $\Perm(G)$ normalised by $\lambda_{\circ}(G)$, via
\begin{equation*}
	\cdot\mapsto \lambda_{\cdot}(G).
\end{equation*}
In this way, given a finite Galois extension of fields $L/K$ with Galois group $(G,\circ)$, we obtain a bijective correspondence between operations $\cdot$  such that $(G,\cdot,\circ)$ is a skew brace and Hopf--Galois structure on $L/K$, which is a key observation for our new point of view.

\section{The new connection}\label{sec: main}

We begin with our main result, in which we propose a new version of the connection between Hopf--Galois structures and skew braces. We underline that some of the consequences, as developed in this section, can also be obtained from the usual theory, for example from~\cite{KKTU19b}, together with the observations on opposite skew braces in~\cite[Theorem 5.6]{KT20}. However, we prefer to develop directly the theory from this new perspective, to highlight how old and new statements can be derived in a transparent way, without too much effort.

\begin{theorem}\label{theorem: main}
	Let $L/K$ be a finite Galois extension of fields with Galois group $(G,\circ)$. Then the following data are equivalent:
	\begin{itemize}
		\item a Hopf--Galois structure on $L/K$;
		\item an operation $\cdot$ such that $(G,\cdot,\circ)$ is a skew brace. 
	\end{itemize} 
	Explicitly, given an operation $\cdot$ such that $(G,\cdot,\circ)$ is a skew brace, we can consider the Hopf--Galois structure on $L/K$ consisting of the $K$-Hopf algebra $L[G,\cdot]^{(G,\circ)}$, where $(G,\circ)$ acts on $L$ via Galois action and on $(G,\cdot)$ via the gamma function of $(G,\cdot,\circ)$, with action on $L$ given as follows:
	\begin{equation*}
	\left(\sum_{\si\in G}\ell_{\sigma}\si\right) \star x=\sum_{\si\in G}\ell_{\sigma} \si(x). 
\end{equation*}
\end{theorem}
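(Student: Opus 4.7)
The plan is to realise this as a reformulation of Greither--Pareigis applied to the \emph{right} regular representation $\rho_{\cdot}(G)$ of $(G,\cdot)$, in place of the left. With this choice, the conjugation action of $\lambda_{\circ}(G)$ will transport to the gamma function of $(G,\cdot,\circ)$ itself, with no twist by inner automorphisms of $(G,\cdot)$; this is precisely why the opposite skew brace becomes invisible in the new perspective.

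First I would verify that $\rho_{\cdot}(G)$ is a regular subgroup of $\Perm(G)$ normalised by $\lambda_{\circ}(G)$: regularity follows from $\rho_{\cdot}(\si)[1]=\si^{-1}$, and the normalisation follows by applying Theorem~\ref{theorem: old connection} to the opposite skew brace $(G,\cdot_{\op},\circ)$, using $\rho_{\cdot}(G)=\lambda_{\cdot_{\op}}(G)$. The key computation is the identity
\begin{equation*}
\lambda_{\circ}(\ta)\,\rho_{\cdot}(\si)\,\lambda_{\circ}(\overline{\ta})=\rho_{\cdot}(\lexp{\si}{\ga(\ta)}),
\end{equation*}
where $\ga$ is the gamma function of $(G,\cdot,\circ)$. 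I would verify this by evaluating at $\ka\in G$ and applying the skew brace axiom to $\ta\circ\bigl((\overline{\ta}\circ \ka)\cdot \si^{-1}\bigr)$, which simplifies to $\ka\cdot \ta^{-1}\cdot(\ta\circ \si^{-1})=\ka\cdot(\lexp{\si}{\ga(\ta)})^{-1}$. Hence the group isomorphism $\rho_{\cdot}\colon (G,\cdot)\to \rho_{\cdot}(G)$ is $(G,\circ)$-equivariant when $(G,\circ)$ acts on the source via $\ga$. In the same way, the Greither--Pareigis action on $L$ becomes $\rho_{\cdot}(\si)\star x=\rho_{\cdot}(\si)^{-1}[1](x)=\si(x)$, matching the formula in the statement.

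Galois descent then yields an isomorphism of $K$-Hopf algebras $L[G,\cdot]^{(G,\circ)}\cong L[\rho_{\cdot}(G)]^{(G,\circ)}$ respecting the actions on $L$, so the construction of the theorem really does give the Hopf--Galois structure that Greither--Pareigis attaches to $\rho_{\cdot}(G)$.

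For the bijection, I would use the correspondence recalled at the end of the preliminary section: $\cdot\mapsto \lambda_{\cdot}(G)$ is a bijection between operations $\cdot$ such that $(G,\cdot,\circ)$ is a skew brace and regular subgroups of $\Perm(G)$ normalised by $\lambda_{\circ}(G)$. Composing with the involution $\cdot\mapsto \cdot_{\op}$, which preserves the skew brace property over $(G,\circ)$, yields the analogous bijection $\cdot\mapsto \rho_{\cdot}(G)$, and Greither--Pareigis transfers it to a bijection with Hopf--Galois structures on $L/K$. The main obstacle is the conjugation computation above: its asymmetry between $\lambda_{\cdot}$ and $\rho_{\cdot}$ is what allows the new point of view to read off the gamma function of $(G,\cdot,\circ)$ directly, rather than that of its opposite.
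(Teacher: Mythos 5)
Your proposal is correct and follows essentially the same route as the paper's proof: the bijection is obtained by composing $\cdot\mapsto\cdot_{\op}$ with $\cdot\mapsto\lambda_{\cdot}(G)$ to land on $\rho_{\cdot}(G)$, the $(G,\circ)$-equivariance of $\si\mapsto\rho_{\cdot}(\si)$ is the same conjugation identity (which you verify at every $\ka$, where the paper checks it only at $1$ and invokes regularity), and Galois descent plus the computation $\rho_{\cdot}(\si)^{-1}[1]=\si$ gives the stated Hopf algebra and action. No gaps.
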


\begin{proof}
Denote by $\mathcal{S}$ the set of group operations $\cdot$ on $G$ such that $(G,\cdot,\circ)$ is a skew brace, and by $\mathcal{R}$ the set of regular subgroups of $\Perm(G)$ normalised by $\lambda_{\circ}(G)$. Consider the composition
\begin{equation*}
    \mathcal{S}\to \mathcal{S}\to \mathcal{R},
\end{equation*}
where the first map is the bijection that sends $\cdot$ to $\cdot_{\op}$ and the second map is the bijection that sends $\cdot$ to $\lambda_{\cdot}(G)$, as described at the end of section~\ref{sec: hopfgalois skewbraces}. Since $\lambda_{\cdot_{\op}}(G)=\rho_{\cdot}(G)$, we obtain a bijection
\begin{equation*}
    \mathcal{S}\to \mathcal{R},\quad \cdot \mapsto \rho_{\cdot}(G),
\end{equation*}
which by Greither--Pareigis theory yields the equivalence of data in the statement. 
	
	We just need to show that the Hopf--Galois structures on $L/K$ can be described in the claimed  way. 
	So take an operation $\cdot$ such that $(G,\cdot,\circ)$ is a skew brace. Clearly $(G,\cdot)\cong \rho_{\cdot}(G)$, via the map
	\begin{equation*}
		\si\mapsto \rho_{\cdot}(\si). 
	\end{equation*} 
	This yields an $L$-Hopf algebra isomorphism $L[G,\cdot]\to L[\rho_{\cdot}(G)]$. Let $(G,\circ)$ act on $(G,\cdot)$ via the gamma function of $(G,\cdot,\circ)$. We show that this isomorphism is also $(G,\circ)$-equivariant. It is enough to show that for all $\si,\ta\in G$,
	\begin{equation*}
		\rho_{\cdot}(\lexp{\ta}{\gamma(\si)})=\lambda_{\circ}(\si)\rho_{\cdot}(\ta)\lambda_{\circ}(\si)^{-1}.
	\end{equation*}
	The claim follows because the left-hand side element is the unique element of $\rho_{\cdot}(G)$ which sends $1\in G$ to 
	\begin{equation*}
		(\lexp{\ta}{\gamma(\si)})^{-1}=\lexpp{\ta^{-1}}{\gamma(\si)}=\si^{-1}\cdot (\si\circ\ta^{-1}),
	\end{equation*}
	while the right-hand side element is the unique element of $\rho_{\cdot}(G)$ which sends $1\in G$ to
	\begin{equation*}
		 \si\circ(\overline{\si}\cdot \ta^{-1})=(\sigma\circ\overline{\sigma})\cdot \si^{-1}\cdot (\si\circ\ta^{-1})=\si^{-1}\cdot (\si\circ\ta^{-1}).
	\end{equation*}
	By Galois descent, we derive that $L[G,\cdot]^{(G,\circ)}$ and $L[\rho_{\cdot}(G)]^{(G,\circ)}$ are isomorphic as $K$-Hopf algebras, and the isomorphism is given as follows:
	\begin{equation*}
	\sum_{\si\in G}\ell_{\sigma}\si \mapsto  \sum_{\si\in G}\ell_{\sigma}\rho_{\cdot}(\si).
\end{equation*}
To conclude, we need to find the action of $L[G,\cdot]^{(G,\circ)}$ on $L$ that respects this isomorphism:
\begin{align*}
	\left(\sum_{\si\in G}\ell_{\sigma}\si\right) \star x&=\left(\sum_{\si\in G}\ell_{\sigma}\rho_{\cdot}(\si)\right) \star x=\sum_{\si\in G}\ell_{\sigma} (\rho_{\cdot}(\si)^{-1}[1])(x)\\
	&=\sum_{\si\in G}\ell_{\sigma} \si(x). \qedhere
\end{align*}
\end{proof}

\begin{remark}
    We believe that this point of view could simplify computation. Indeed, note the similarities of the Hopf--Galois action described in Theorem~\ref{theorem: main} with the usual Galois action. Also, an important role is played by the gamma function, which is a well-known and studied feature of a skew brace. 
\end{remark}

\begin{remark}
Following the proof of Theorem~\ref{theorem: main}, it should be clear that we are associating with a Hopf--Galois structure on $L/K$ the skew brace that is opposite to the usual one. Explicitly, given a Hopf--Galois structure in Greither--Pareigis terms, so a regular subgroup $N$ of $\Perm(G)$ normalised by $\lambda_{\circ}(G)$, then the way to find the operation $\cdot$ associated to this structure is the following: 
\begin{equation*}
    \sigma\cdot\tau=\nu(\nu^{-1}(\tau)\nu^{-1}(\sigma)),
\end{equation*}
where $\nu\colon N\to G$ is the usual bijection that maps $\eta$ to $\eta[1]$.
\end{remark}

For the rest of the section, we fix a finite Galois extension $L/K$ with Galois group $(G,\circ)$.
\begin{notation}
    To lighten the notation, we associate a Hopf--Galois structure on $L/K$ with a skew brace $(G,\cdot,\circ)$, implicitly meaning the operation $\cdot$ such that $(G,\cdot,\circ)$ is a skew brace.
\end{notation}

We immediately see that the new version of the connection fixes the peculiar behaviour described in Example~\ref{example: peculiar}. 

\begin{example}
\leavevmode
	\begin{itemize}
		\item Consider the trivial skew brace $(G,\circ,\circ)$. As the gamma function in this case is given by $\gamma(\si)=\id$, we find that the Hopf algebra in the Hopf--Galois structure on $L/K$ associated with $(G,\circ,\circ)$ is $K[G,\circ]$, and we recover the classical structure.  
		\item If instead we consider the almost trivial skew brace $(G,\circ_{\op},\circ)$, we find the Hopf--Galois  structure on $L/K$ originally corresponding to $\lambda_{\circ}(G)$, that is, the canonical nonclassical structure. 
	\end{itemize}
\end{example}

\begin{example}\label{example: example}
   Let $A$ and $B$ be finite groups. Consider a group homomorphism $\alpha\colon B\to \Aut(A)$, and suppose that $(G,\circ)$ is the semidirect product of $A$ and $B$ with respect to $\alpha$. Given $(a,b)\in G$ and $x\in L$, write $(a,b)(x)$ for the Galois action. Finally, take  $(G,\cdot)=A\times B$. Then by~\cite[Example 1.4]{GV17}, we have that $(G,\cdot,\circ)$ is a skew brace. We obtain a Hopf--Galois structure on $L/K$, which we now describe. 
    
    First, a straightforward calculation shows that the gamma function of $(G,\cdot,\circ)$ is given as follows:
    \begin{equation*}
        \lexpp{a,b}{\gamma(c,d)}=(\lexp{a}{\alpha(d)},b).
    \end{equation*}
    In particular, the $K$-Hopf algebra $L[G,\cdot]^{(G,\circ)}$ we obtain consists of the elements $\sum_{(a,b)\in G}\ell_{(a,b)}(a,b)\in L[G,\cdot]$ that satisfy, for all $(c,d)\in G$,
    \begin{equation*}
       \sum_{(a,b)\in G}\ell_{(a,b)}(a,b)= \sum_{(a,b)\in G}[(c,d)(\ell_{(a,b)})](\lexp{a}{\alpha(d)},b).
    \end{equation*}
    Such an element acts on $L$ as follows:
    \begin{equation*}
        \left(\sum_{(a,b)\in G}\ell_{(a,b)}(a,b)\right)\star x=\sum_{(a,b)\in G}\ell_{(a,b)}(a,b)(x).
    \end{equation*}
\end{example}

Given a Hopf--Galois structure on $L/K$ with associated skew brace $(G,\cdot,\circ)$, we can define the \emph{type} of the structure to be the isomorphism class of $(G,\cdot)$; note that this coincides with the usual definition. In particular, the known results about existence and classification can also be translated and obtained using the new point of view. Indeed, Theorem~\ref{theorem: quantitative} immediately follows from Theorem~\ref{theorem: main}, as well as the result counting the number of Hopf--Galois structures associated with the same isomorphism class of a skew brace. We recall this result and its proof here, which is just a slight modification of the proof of~\cite[Corollary 3.1]{KT22}. 
\begin{proposition}
	Let $(G,\cdot,\circ)$ be a skew brace. Then there are 
	\begin{equation*}
		\frac{|\Aut(G,\circ)|}{|\Aut(G,\cdot,\circ)|}
	\end{equation*}
	Hopf--Galois structures on $L/K$ such that the associated skew brace is isomorphic to $(G,\cdot,\circ)$. 
\end{proposition}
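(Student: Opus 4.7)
The plan is to exploit the bijective correspondence from Theorem~\ref{theorem: main}: Hopf--Galois structures on $L/K$ correspond bijectively to operations $\cdot'$ on the set $G$ such that $(G,\cdot',\circ)$ is a skew brace. So the task becomes to count the operations $\cdot'$ such that $(G,\cdot',\circ)$ is a skew brace isomorphic to $(G,\cdot,\circ)$.

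First I would introduce an action of $\Aut(G,\circ)$ on the set of all operations $\cdot'$ for which $(G,\cdot',\circ)$ is a skew brace: given $f\in\Aut(G,\circ)$ and such an operation $\cdot'$, define a new operation $\cdot''$ by
\begin{equation*}
    \si\cdot''\ta=f(f^{-1}(\si)\cdot' f^{-1}(\ta)).
\end{equation*}
A short verification, using that $f$ preserves $\circ$, shows that $(G,\cdot'',\circ)$ is again a skew brace and that $f$ is an isomorphism $(G,\cdot',\circ)\to(G,\cdot'',\circ)$; conversely, any skew brace isomorphism $(G,\cdot',\circ)\to(G,\cdot'',\circ)$ is an element of $\Aut(G,\circ)$, since $\circ$ is fixed.

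Next I would show that the orbit of $\cdot$ under this action consists exactly of those operations $\cdot'$ for which $(G,\cdot',\circ)$ is isomorphic to $(G,\cdot,\circ)$ as a skew brace. One direction is built into the definition of the action; for the other, any skew brace isomorphism $\ph\colon (G,\cdot,\circ)\to (G,\cdot',\circ)$ is an element $f=\ph$ of $\Aut(G,\circ)$ whose action sends $\cdot$ to $\cdot'$. The stabilizer of $\cdot$ consists of those $f\in \Aut(G,\circ)$ such that $f(\si\cdot\ta)=f(\si)\cdot f(\ta)$ for all $\si,\ta\in G$, which is precisely $\Aut(G,\cdot,\circ)$.

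Finally, applying the orbit--stabilizer theorem yields the claimed count $|\Aut(G,\circ)|/|\Aut(G,\cdot,\circ)|$. There is no serious obstacle here; the only thing to be careful about is checking that the transported operation does indeed satisfy the skew brace identity with respect to the unchanged operation $\circ$, and that the notion of skew brace isomorphism (which must respect both operations) is compatible with the fact that the second operation is the same on both sides, forcing such an isomorphism to lie in $\Aut(G,\circ)$.
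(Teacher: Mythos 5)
Your proposal is correct and follows essentially the same route as the paper: the same action of $\Aut(G,\circ)$ on the set of admissible operations by transport of structure, the same identification of the orbit of $\cdot$ with the isomorphism class and of the stabiliser with $\Aut(G,\cdot,\circ)$, and the orbit--stabiliser theorem. The extra verifications you flag (that the transported operation is again a skew brace operation with the same $\circ$, and that a skew brace isomorphism fixing $\circ$ lies in $\Aut(G,\circ)$) are exactly the routine checks the paper leaves implicit.
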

\begin{proof}
	 Consider the set $\mathcal{S}$ of group operations $\cdot'$ on $G$ such that $(G,\cdot',\circ)$ is a skew brace. We need to count for how many operations $\cdot'\in \mathcal{S}$, the skew brace $(G,\cdot',\circ)$ is isomorphic to $(G,\cdot,\circ)$. There is an action of $\Aut(G,\circ)$ on $\mathcal{S}$, as follows:
	\begin{equation*}
		\phi\colon \cdot'\to \cdot'_{\phi}, \quad \si\cdot_{\phi}'\ta=\phi(\phi^{-1}(\si)\cdot' \phi^{-1}(\ta)). 
	\end{equation*}
	Then the orbit of $\cdot\in \mathcal{S}$ consists precisely of the operations $\cdot'$ such that $(G,\cdot',\circ)$ is a skew brace isomorphic to $(G,\cdot,\circ)$. As the stabiliser of $\cdot$ under this action is $\Aut(G,\cdot,\circ)$, we derive the assertion.  
\end{proof}

We also remark that Byott's translation~\cite{Byo96} for Galois extensions, an extremely useful tool to count Hopf--Galois structures, can be obtained in this fashion. We recall here the statement and a quick proof, along the lines of the one described in~\cite[section 7]{Chi00}, but without involving regular subgroups. Let $(N,\cdot)$ be a group of the same order as $(G,\circ)$. Denote by $e(G,N)$ the number of Hopf--Galois structures on $L/K$ of type $(N,\cdot)$, which by Theorem~\ref{theorem: main} equals the number of operations $\cdot$ such that $(G,\cdot,\circ)$ is a skew brace with $(G,\cdot)\cong (N,\cdot)$, and denote by $f(G,N)$ the number of operations $\circ$ such that $(N,\cdot,\circ)$ is a skew brace with $(N,\circ)\cong (G,\circ)$.

\begin{theorem}
The following equality holds:
\begin{equation*}
    e(G,N)=\frac{|\Aut(G,\circ)|}{|\Aut(N,\cdot)|}f(G,N).
\end{equation*}
\end{theorem}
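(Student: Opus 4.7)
The plan is to express both $e(G,N)$ and $f(G,N)$ as sums over the same set of isomorphism classes of skew braces, and to compare. Let $\mathcal{I}$ denote the set of isomorphism classes of skew braces $(A,\cdot,\circ)$ with $(A,\cdot)\cong N$ and $(A,\circ)\cong G$.

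By Theorem~\ref{theorem: main}, $e(G,N)$ equals the number of operations $\cdot'$ on $G$ such that $(G,\cdot',\circ)$ is a skew brace with $(G,\cdot')\cong N$. I would partition this set of operations according to the isomorphism class of the resulting skew brace $(G,\cdot',\circ)$; since $(G,\circ)$ is fixed, only classes in $\mathcal{I}$ contribute. The preceding proposition then says that each class $[(A,\cdot,\circ)]\in\mathcal{I}$ contributes exactly $|\Aut(G,\circ)|/|\Aut(A,\cdot,\circ)|$ operations, giving
\begin{equation*}
    e(G,N)=|\Aut(G,\circ)|\sum_{[(A,\cdot,\circ)]\in\mathcal{I}}\frac{1}{|\Aut(A,\cdot,\circ)|}.
\end{equation*}

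For $f(G,N)$ I would establish the symmetric counterpart of the preceding proposition by running exactly the same orbit-stabiliser argument, with the roles of $\cdot$ and $\circ$ swapped. Namely, on the set $\mathcal{T}$ of operations $\circ'$ on $N$ such that $(N,\cdot,\circ')$ is a skew brace, the group $\Aut(N,\cdot)$ acts via $\si\circ_{\phi}'\ta=\phi(\phi^{-1}(\si)\circ' \phi^{-1}(\ta))$; one checks (as in the proof above) that this action preserves the skew brace axiom, that the stabiliser of $\circ'$ is $\Aut(N,\cdot,\circ')$, and that two operations lie in the same orbit precisely when the corresponding skew braces are isomorphic. Restricting to the subset of operations giving $(N,\circ')\cong G$ and partitioning by the same isomorphism classes in $\mathcal{I}$, I obtain
\begin{equation*}
    f(G,N)=|\Aut(N,\cdot)|\sum_{[(A,\cdot,\circ)]\in\mathcal{I}}\frac{1}{|\Aut(A,\cdot,\circ)|}.
\end{equation*}

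The two displays share the same sum, so dividing yields the claimed identity. I do not expect a serious obstacle here: the only mildly delicate step is verifying the symmetric orbit-stabiliser statement used for $f(G,N)$, which is essentially a verbatim copy of the previous proposition with $\cdot$ and $\circ$ interchanged; the skew brace axiom is preserved under conjugation of either operation by an automorphism of the other, so the argument transposes without difficulty.
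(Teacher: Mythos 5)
Your argument is correct, but it follows a genuinely different route from the paper's. The paper sets up the two sets $\mathcal{N}'$ and $\mathcal{G}'$ of bijections $\varphi\colon N\to G$ (resp.\ $\psi\colon G\to N$) whose transported operation yields a skew brace, observes that $\varphi\mapsto\varphi^{-1}$ is a bijection $\mathcal{N}'\to\mathcal{G}'$, and then counts each set once via a \emph{free} action of $\Aut(N,\cdot)$ (resp.\ $\Aut(G,\circ)$), so that $|\mathcal{N}'|=|\Aut(N,\cdot)|\,e(G,N)$ and $|\mathcal{G}'|=|\Aut(G,\circ)|\,f(G,N)$; no mention of isomorphism classes of skew braces is needed. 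You instead push the orbit--stabiliser computation one level further and write both $e(G,N)$ and $f(G,N)$ as $|\Aut(G,\circ)|$ (resp.\ $|\Aut(N,\cdot)|$) times the ``mass'' $\sum 1/|\Aut(A,\cdot,\circ)|$ over isomorphism classes, invoking the preceding proposition and its mirror image (whose verification you correctly identify as the only point needing checking, and which does go through verbatim: conjugating $\circ'$ by $\phi\in\Aut(N,\cdot)$ preserves the skew brace axiom, the stabiliser is $\Aut(N,\cdot,\circ')$, and orbits are isomorphism classes). In effect you are carrying out explicitly the double-coset decomposition that the paper's single bijection sidesteps. Your version costs a second orbit--stabiliser lemma but yields the sharper class-by-class mass formula as a by-product; the paper's is shorter and avoids isomorphism classes altogether. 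One cosmetic point: when $\mathcal{I}=\emptyset$ you cannot literally ``divide'' the two displays, but then $e(G,N)=f(G,N)=0$ and the identity holds trivially, so nothing is lost.
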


\begin{proof}
    Consider $\mathcal{N}=\{\text{bijections }\varphi\colon N\to G\}$ and $\mathcal{G}=\{\text{bijections }\psi\colon G\to N\}$. Clearly, there exists a bijection
    \begin{equation*}
        \delta\colon \mathcal{N}\to \mathcal{G},\quad \varphi\mapsto \varphi^{-1}.
    \end{equation*}
    For all $\varphi\in \mathcal{N}$, consider $(G,\cdot_{\varphi})$, where $\cdot_{\varphi}$ is the operation obtained by $\varphi$ via transport of structure. In particular, $\varphi\colon (N,\cdot)\to (G,\cdot_{\varphi})$ is an isomorphism. Similarly, for all $\psi\in \mathcal{G}$, one can define $(N,\circ_{\psi})$. It is straightforward to check that $\delta$ restricts to a bijection 
    \begin{equation*}
        \mathcal{N}'=\{\varphi\in\mathcal{N}\mid (G,\cdot_{\varphi},\circ) \text{ is a skew brace}\}\to \mathcal{G}'=\{\psi\in\mathcal{G}\mid (N,\cdot,\circ_{\psi}) \text{ is a skew brace}\}.
    \end{equation*}
    Note that the right action of $\Aut(N,\cdot)$ on $\mathcal{N}'$ via composition satisfies the following properties:
    \begin{itemize}
        \item The orbits of $\mathcal{N}'$ under the action of $\Aut(N,\cdot)$ correspond bijectively to the operations $\cdot$  such that $(G,\cdot,\circ)$ is a skew brace and $(N,\cdot)\cong (G,\cdot)$.
        \item The action of $\Aut(N,\cdot)$ on $\mathcal{N}'$ is fixed-point-free.
    \end{itemize}
    We deduce that the cardinality of $\mathcal{N}'$ equals $|\Aut(N,\cdot)|e(G,N)$. A similar argument yields that the cardinality of $\mathcal{G}'$ equals $|\Aut(G,\circ)|f(G,N)$, so 
    \begin{equation*}
    	|\Aut(N,\cdot)|e(G,N)=|\Aut(G,\circ)|f(G,N).\qedhere
    \end{equation*}
\end{proof}

We describe now the structure of the Hopf algebras in terms of the associated skew braces. Consider a Hopf--Galois structure on $L/K$, with associated skew brace $(G,\cdot,\circ)$.

\begin{theorem}
	The $K$-sub Hopf algebras of $L[G,\cdot]^{(G,\circ)}$ are precisely those of the form $L[G',\cdot]^{(G,\circ)}$ for left ideals $G'$ of $(G,\cdot,\circ)$. Moreover, $L[G',\cdot]^{(G,\circ)}$ is normal in $L[G,\cdot]^{(G,\circ)}$ if and only if $G'$ is a strong left ideal of $(G,\cdot,\circ)$. 
\end{theorem}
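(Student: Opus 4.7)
The plan is to deduce the statement from Galois descent applied to the $L$-Hopf algebra $L[G,\cdot]$, together with the fact that $(G,\circ)$ acts on $(G,\cdot)$ via the gamma function, which by definition lands in $\Aut(G,\cdot)$. This puts us precisely in the setting of the preparatory example in section~\ref{sec: hopfgalois skewbraces}, with $N=(G,\cdot)$ and the $(G,\circ)$-action via $\gamma$; in particular $L[G,\cdot]\in\cL$.

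First I would apply Galois descent: the $K$-sub Hopf algebras of $L[G,\cdot]^{(G,\circ)}$ correspond bijectively to the $L$-sub Hopf algebras of $L[G,\cdot]$ that are stable under the action of $(G,\circ)$. By the cited example, the $L$-sub Hopf algebras of $L[G,\cdot]$ are precisely those of the form $L[G',\cdot]$ for subgroups $G'$ of $(G,\cdot)$, and $L[G',\cdot]$ is $(G,\circ)$-stable if and only if $G'$ itself is $(G,\circ)$-stable, i.e.\ invariant under the gamma function action. By definition, this is exactly the condition that $G'$ be a left ideal of $(G,\cdot,\circ)$. Transporting each such $L$-sub Hopf algebra back via $M\mapsto M^{(G,\circ)}$ gives the description $L[G',\cdot]^{(G,\circ)}$ stated in the theorem.

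Second, I would handle the normality claim. Galois descent preserves normality: a $(G,\circ)$-stable $L$-sub Hopf algebra $M'\subseteq L[G,\cdot]$ satisfies that $(M')^{(G,\circ)}$ is normal in $L[G,\cdot]^{(G,\circ)}$ if and only if $M'$ is normal in $L[G,\cdot]$. Applied to $M'=L[G',\cdot]$ for a left ideal $G'$, and using once more that for group algebras $L[G',\cdot]$ is normal in $L[G,\cdot]$ precisely when $G'$ is normal in $(G,\cdot)$, we get that $L[G',\cdot]^{(G,\circ)}$ is normal in $L[G,\cdot]^{(G,\circ)}$ exactly when $G'$ is simultaneously a left ideal and normal in $(G,\cdot)$, which is the definition of a strong left ideal.

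No serious obstacle is expected; the whole argument is a dictionary between the descent data and the skew brace data. The only point worth double-checking is that the $(G,\circ)$-action on $L[G,\cdot]$ specified in Theorem~\ref{theorem: main}, namely Galois action on $L$ and the gamma function action on basis elements, matches the hypothesis of the cited example, so that both the classification of $L$-sub Hopf algebras and the preservation of normality under descent may be invoked verbatim.
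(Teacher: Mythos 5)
Your proof is correct and follows exactly the paper's argument: Galois descent applied to $L[G,\cdot]$ with the $(G,\circ)$-action via the gamma function, combined with the observation that the $(G,\circ)$-stable subgroups of $(G,\cdot)$ are precisely the left ideals and that normality descends as for group algebras. The paper states this in one line; your version simply makes the same steps explicit.
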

\begin{proof}
	This follows from Galois descent and the fact that the subgroups of $(G,\cdot)$ invariant under the action of $(G,\circ)$ via the gamma function of $(G,\cdot,\circ)$ are precisely the left ideals of $(G,\cdot,\circ)$. 
\end{proof}

Consider a left ideal $G'$ of $(G,\cdot,\circ)$. Then $G'$ corresponds to an intermediate field $L^{H'}$ of $L/K$ via the Hopf--Galois correspondence, where $H'=L[G',\cdot]^{(G,\circ)}$. But as $G'$ is a subgroup of $(G,\circ)$, we have that $G'$ also corresponds to an intermediate field $F$ of $L/K$ via the usual Galois correspondence. We denote both fields by $L^{G'}$, the ambiguity justified by the following pleasant consequence of Theorem~\ref{theorem: main}.
\begin{corollary}\label{corollary: nice equality}
The following equality holds:
	\begin{equation*}
		L^{H'}=F
	\end{equation*}
\end{corollary}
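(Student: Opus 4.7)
The plan is to establish both inclusions $F\subseteq L^{H'}$ and $L^{H'}\subseteq F$, using the explicit description of the action supplied by Theorem~\ref{theorem: main} together with a dimension count provided by Galois descent.

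For the forward inclusion $F\subseteq L^{H'}$, I would argue directly. Take $x\in F$, i.e.\ $\sigma(x)=x$ for every $\sigma\in G'$. A typical element of $H'=L[G',\cdot]^{(G,\circ)}$ has the form $h'=\sum_{\sigma\in G'}\ell_\sigma\sigma$, and its counit is $\epsilon(h')=\sum_{\sigma\in G'}\ell_\sigma$, because the counit of $L[G',\cdot]$ sends each $\sigma\in G'$ (a grouplike element) to $1$, and this descends. The formula for $\star$ from Theorem~\ref{theorem: main} then gives
\begin{equation*}
h'\star x=\sum_{\sigma\in G'}\ell_\sigma\sigma(x)=\Big(\sum_{\sigma\in G'}\ell_\sigma\Big)x=\epsilon(h')\,x,
\end{equation*}
so $x\in L^{H'}$.

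For the reverse inclusion I would use dimensions rather than wrestle directly with the definition of $L^{H'}$. By Galois descent, $L\otimes_K H'\cong L[G',\cdot]$ as $L$-vector spaces, so $\dim_K H'=|G'|$. The general Hopf--Galois fact recalled just after the definition of the Hopf--Galois correspondence in the preliminaries gives $[L:L^{H'}]=\dim_K H'=|G'|$. On the other hand, classical Galois theory yields $[L:F]=|G'|$, since $F$ is the fixed field of the subgroup $G'$ of $(G,\circ)$. Combining these with $F\subseteq L^{H'}$ forces $F=L^{H'}$.

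I expect no serious obstacle; the only point that deserves care is the identification of the counit on $H'$. If one prefers to avoid the dimension count, an alternative is to extend the $H'$-action $L$-linearly to $L\otimes_K H'\cong L[G',\cdot]$ and extend the counit $L$-linearly; the relation $h\star x=\epsilon(h)x$ then propagates from $H'$ to all of $L\otimes_K H'$, and evaluating at the basis element $\sigma\in G'$ (where $\epsilon(\sigma)=1$) gives $\sigma(x)=x$, i.e.\ $x\in F$. Either route is short given the setup built up in the previous sections.
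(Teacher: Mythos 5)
Your proof is correct and follows essentially the same route as the paper's: the inclusion $F\subseteq L^{H'}$ is verified by the same direct computation with the explicit action from Theorem~\ref{theorem: main}, and equality is then forced by the degree count $[L:F]=|G'|=[L:L^{H'}]$. The extra care you take with identifying the counit on $H'$ is a harmless elaboration of the same argument.
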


\begin{proof}
	 It is clear that if $x\in F$, then $x\in L^{H'}$. Indeed, given $\sum_{\sigma\in G}\ell_{\sigma} \sigma\in H'$, we have 
	\begin{equation*}
		\left(\sum_{\si\in G}\ell_{\sigma} \si\right)\star x=\sum_{\si\in G} \ell_{\sigma}\si(x)=\sum_{\si\in G} \ell_{\sigma} x=\ep\left(\sum_{\si\in G}\ell_{\sigma} \si\right)x.
	\end{equation*}
	The assertion then follows from $[L:F]=|G'|=[L:L^{H'}]$.
\end{proof}

As the action of $(G,\circ)$ on $(G,\cdot)$ is given by the gamma function of $(G,\cdot,\circ)$, we can easily describe the grouplike elements of $L[G,\cdot]^{(G,\circ)}$.
\begin{corollary}
The grouplike elements of the $K$-Hopf algebra $L[G,\cdot]^{(G,\circ)}$ are the elements of $\Fix(G,\cdot,\circ)$.
\end{corollary}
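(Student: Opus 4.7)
The plan is to invoke Galois descent directly and reduce the question to a simple identification. Recall that the Hopf--Galois structure in question arises by viewing $L[G,\cdot]$ as a $(G,\circ)$-compatible $L$-Hopf algebra, where $(G,\circ)$ acts on $L$ via the Galois action and on $(G,\cdot)$ via the gamma function $\gamma$ of $(G,\cdot,\circ)$. The corresponding $K$-Hopf algebra is $L[G,\cdot]^{(G,\circ)}$.

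The first key step is the general principle, recorded in the Galois descent preliminaries: if $M \in \cL$ and $h \in M^G$, then $h$ is grouplike in $M^G$ if and only if $h$ is grouplike in $M$. Applied to $M = L[G,\cdot]$, this says that the grouplike elements of $L[G,\cdot]^{(G,\circ)}$ are precisely those grouplike elements of $L[G,\cdot]$ which happen to be fixed by the $(G,\circ)$-action.

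The second step is the already-noted fact that the grouplike elements of a group algebra $L[G,\cdot]$ are exactly the elements of $(G,\cdot)$. So we are reduced to identifying which $\ta \in G$ are fixed by every $\si \in (G,\circ)$ under the gamma action. By definition, $\si$ sends $\ta$ to $\lexp{\ta}{\gamma(\si)}$, so $\ta$ is fixed by all of $(G,\circ)$ if and only if $\lexp{\ta}{\gamma(\si)} = \ta$ for every $\si \in G$. This is precisely the defining condition of $\Fix(G,\cdot,\circ)$, and the corollary follows.

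No step here presents a genuine obstacle: all the real work was already carried out in the Galois descent preliminaries and in the worked example dealing with group algebras $L[N]$, so this corollary is essentially an immediate specialisation once the gamma-function action is identified as the action of $(G,\circ)$ on $(G,\cdot)$.
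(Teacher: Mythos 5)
Your argument is correct and matches the paper's (implicit) reasoning exactly: the paper derives this corollary from the Galois descent fact that grouplikes of $M^{(G,\circ)}$ are the grouplikes of $M$ fixed by the action, combined with the observation that the grouplikes of $L[G,\cdot]$ are the elements of $(G,\cdot)$ and that the action is via the gamma function, whose fixed points are by definition $\Fix(G,\cdot,\circ)$. Nothing is missing.
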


We study now how several known notions in skew brace theory have a natural description in Hopf--Galois theory.

 \textbf{Left ideals.} As already mentioned, a left ideal $G'$ of $(G,\cdot,\circ)$ corresponds to a $K$-sub Hopf algebra $L[G',\cdot]^{(G,\circ)}$ of $L[G,\cdot]^{(G,\circ)}$, which then corresponds to an intermediate field $F=L^{G'}$ of $L/K$. The extension $L/F$ is Galois with Galois group $(G',\circ)$, and there exists a natural Hopf--Galois structure on $L/F$ given by the $F$-Hopf algebra $F\otimes_{K} L[G',\cdot]^{(G,\circ)}$. The skew brace associated with this Hopf--Galois structure is precisely $(G',\cdot,\circ)$. Indeed, by Galois descent, the natural map
 \begin{equation*}
 	F\otimes_{K} L[G',\cdot]^{(G,\circ)}\to L[G',\cdot]^{(G',\circ)}
 \end{equation*}
 is an $F$-Hopf algebra isomorphism, and as both the actions of these Hopf algebras on $L$ are obtained by that of $L[G,\cdot]^{(G,\circ)}$, the assertion easily follows. 
 
\textbf{Strong left ideals.}
Suppose in addition that $G'$ is a strong left ideal of $(G,\cdot,\circ)$, so $G'$ is normal in $(G,\cdot)$. In this case, $L[G',\cdot]^{(G,\circ)}$ is normal in $L[G,\cdot]^{(G,\circ)}$, and we obtain a short exact sequence of  $K$-Hopf algebras
	\begin{equation*}
		K\to L[G',\cdot]^{(G,\circ)}\to L[G,\cdot]^{(G,\circ)}\to L[G/G',\cdot]^{(G,\circ)}\to K.
	\end{equation*}
	We find a Hopf--Galois structure on $F/K$ with $K$-Hopf algebra  $L[G/G',\cdot]^{(G,\circ)}$.
	
\textbf{Ideals.} Finally, suppose that $G'$ is an ideal of $(G,\cdot,\circ)$.
	Then $F/K$ is Galois with Galois group $(G/G',\cdot)$, and the Hopf--Galois structure on $F/K$ given by $L[G/G',\cdot]^{(G,\circ)}$ is associated with the skew brace $(G/G',\cdot,\circ)$, because in this case the equality $L[G/G',\cdot]^{(G,\circ)}=F[G/G',\cdot]^{(G/G',\circ)}$ holds.

\textbf{Semidirect products.} Suppose that $(G,\cdot,\circ)$ is isomorphic to a semidirect product of skew braces. Then there exists an ideal $G_1$ and a strong left ideal $G_2$ of $(G,\cdot,\circ)$ such that $(G,\circ)$ is the inner semidirect product of $(G_1,\circ)$ and $(G_2,\circ)$, and $(G,\cdot)$ is the inner direct product of $(G_1,\cdot)$ and $(G_2,\cdot)$. Write $F_1=L^{G_1}$ and $F_2=L^{G_2}$. In this case, the towers $K\subseteq F_1\subseteq L$ and $K\subseteq F_2\subseteq L$ are described exactly as before. Moreover, $L[G,\cdot]$ is isomorphic to $L[G_1,\cdot]\otimes_{L}L[G_2,\cdot]$ as $(G,\circ)$-compatible $L$-Hopf algebras, and by Galois descent, 
	\begin{equation*}
		L[G,\cdot]^{(G,\circ)}\cong L[G_1,\cdot]^{(G,\circ)}\otimes_{K}L[G_2,\cdot]^{(G,\circ)}
	\end{equation*} as $K$-Hopf algebras.
	
	We note moreover that because $G_1$ is an ideal of $(G,\cdot,\circ)$, the obvious isomorphism
    $\ph\colon (G_2,\circ)\to (G/G_1,\circ)$ between Galois groups is in fact an isomorphism of skew braces
    $\ph\colon (G_2,\cdot,\circ)\to (G/G_1,\cdot,\circ)$. This implies that the Hopf--Galois structures on $L/F_2$ and $F_1/K$ given by the previous description are associated with  skew braces isomorphic in a natural way. By this observation and Galois descent, we can also deduce that
	\begin{equation*}
		F_2\otimes_K F_1[G/G_1,\cdot]^{(G/G_1,\circ)}\cong L[G_2,\cdot]^{(G_2,\circ)}
	\end{equation*}
	as $F_2$-Hopf algebras. 
	
\textbf{Direct products.} If the semidirect product is also direct, then the Galois group $(G,\circ)$ is the inner direct product of $(G_1,\circ)$ and $(G_2,\circ)$, and we can repeat the previous analysis also for $F_2/K$, which is Galois in this case.

\textbf{Metatriviality.} Suppose now that $(G,\cdot,\circ)$ metatrivial. Consider an ideal $G'$ of $(G,\cdot,\circ)$ such that $(G',\cdot,\circ)$ and $(G/G',\cdot,\circ)$ are trivial skew braces, and write $F=L^{G'}$.  Then the Hopf--Galois structures on $L/F$ and $F/K$ obtained by the action of $L[G,\cdot]^{(G,\circ)}$ on $L$ are the classical structures.

\begin{remark}
   There are notions of solubility and nilpotency of skew braces that generalise metatriviality; see, for example,~\cite{CSV19,KSV21}. For Hopf--Galois structures associated with skew braces $(G,\cdot,\circ)$ with these properties, similar conclusions, involving tower of intermediate fields of $L/K$, can be derived.

	 We believe that the study of this kind of properties of the skew brace, together with appropriate ramification on the extension $L/K$, could bring new results in Hopf--Galois module theory. For example, in~\cite{Byo02}, a key role for the study of the Hopf--Galois module structure of a Galois extension $L/K$ of $p$-adic fields of degree $p^2$, $p$ a prime, was played by an intermediate normal field $F$ of $L/K$ such that, given a Hopf--Galois structure on $L/K$, $F$ is in the image of the Hopf--Galois correspondence, and the Hopf--Galois structure on $L/K$ yields the classical structures on $L/F$ and $F/K$.
   
   More generally, all the skew braces obtained with~\cite[Theorem 6.6]{ST23}, which generalise several constructions developed in recent years, are metatrivial, so similar reasonings could be repeated. 
\end{remark}

\section{The Hopf--Galois correspondence}\label{sec: examples}
In this final section, we study the Hopf--Galois correspondence with respect to the new version of the connection. 
We fix a finite Galois extension of fields $L/K$ with Galois group $(G,\circ)$. From the discussion of section~\ref{sec: main}, we immediately derive the following result.
\begin{corollary}\label{corollary: main description}
	Consider a Hopf--Galois structure on $L/K$, with associated skew brace $(G,\cdot,\circ)$. Then the Hopf--Galois correspondence for this structure is surjective if and only if every subgroup of $(G,\circ)$ is a left ideal of $(G,\cdot,\circ)$. 
	
	Specifically, if $G'$ is a subgroup of $(G,\circ)$, then $L^{G'}$ is in the image of the Hopf--Galois correspondence if and only if $G'$ is a left ideal of $(G,\cdot,\circ)$. 
\end{corollary}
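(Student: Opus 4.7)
The plan is to combine the theorem describing the $K$-sub Hopf algebras of $L[G,\cdot]^{(G,\circ)}$ (those of the form $L[G',\cdot]^{(G,\circ)}$ for left ideals $G'$ of $(G,\cdot,\circ)$) with Corollary~\ref{corollary: nice equality}, which identifies the fixed field of such a sub Hopf algebra with the Galois-theoretic fixed field of $G'$ inside $(G,\circ)$.

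First, I would address the ``specifically'' part, which really does the work. Let $G'$ be a subgroup of $(G,\circ)$. If $G'$ happens to be a left ideal of $(G,\cdot,\circ)$, then $H'=L[G',\cdot]^{(G,\circ)}$ is a $K$-sub Hopf algebra of $L[G,\cdot]^{(G,\circ)}$ by the previously established theorem, and its image under the Hopf--Galois correspondence is $L^{H'}$, which by Corollary~\ref{corollary: nice equality} coincides with the classical Galois fixed field $L^{G'}$. Conversely, if $L^{G'}$ lies in the image of the Hopf--Galois correspondence, it must equal $L^{H''}$ for some $K$-sub Hopf algebra $H''$, and the aforementioned theorem forces $H''=L[G'',\cdot]^{(G,\circ)}$ for a left ideal $G''$ of $(G,\cdot,\circ)$. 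Corollary~\ref{corollary: nice equality} then gives $L^{G''}=L^{H''}=L^{G'}$, so by the classical Galois correspondence $G''=G'$, which shows that $G'$ itself is a left ideal.

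The first assertion now follows immediately. The Hopf--Galois correspondence is surjective precisely when every intermediate field of $L/K$ arises as $L^{H'}$ for some $K$-sub Hopf algebra $H'$. Since by the classical Galois correspondence every intermediate field has the form $L^{G'}$ for a unique subgroup $G'$ of $(G,\circ)$, the second paragraph of the statement tells us that this happens if and only if every such $G'$ is a left ideal of $(G,\cdot,\circ)$.

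There is no real obstacle here: the bulk of the work was already done in Section~\ref{sec: main}, and the argument is just an assembly of the sub Hopf algebra classification, Corollary~\ref{corollary: nice equality}, and the injectivity of the classical Galois correspondence. The only point requiring minor care is ensuring one does not conflate the two \emph{a priori} different meanings of $L^{G'}$ (Hopf--Galois versus classical), which is exactly what Corollary~\ref{corollary: nice equality} permits.
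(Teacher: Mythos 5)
Your argument is correct and is exactly the assembly the paper intends: the paper gives no separate proof, deriving the corollary ``immediately'' from the classification of $K$-sub Hopf algebras as $L[G',\cdot]^{(G,\circ)}$ for left ideals $G'$ together with Corollary~\ref{corollary: nice equality} and the injectivity of the classical Galois correspondence. Nothing is missing; your care in distinguishing the two meanings of $L^{G'}$ is precisely the point that corollary settles.
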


\begin{example}
	Consider the classical structure, with associated skew brace $(G,\circ,\circ)$. In this case, every subgroup of $(G,\circ)$ is a left ideal of $(G,\circ,\circ)$, so we find, as expected, that the Hopf--Galois correspondence for this structure is surjective.
\end{example}

We note the following facts, which are direct consequences of Corollary~\ref{corollary: main description}

\begin{remark}\label{remark: not enough subgroups}
	If $(G,\cdot,\circ)$ is a skew brace and $(G,\cdot)$ has less subgroups than $(G,\circ)$, then for the Hopf--Galois structure on $L/K$ associated with $(G,\cdot,\circ)$, the Hopf--Galois correspondence is not surjective. 
\end{remark}

\begin{remark}\label{remark: direct product}
Suppose that $(G,\cdot,\circ)$ is a skew brace isomorphic to the direct product of skew braces $(G_i,\cdot,\circ)$ of pairwise coprime orders. If all the subgroups of $(G_i,\circ)$ are left ideals of $(G_i,\cdot,\circ)$, then all the subgroups of $(G,\circ)$ are left ideals of $(G,\cdot,\circ)$, so for the Hopf--Galois structure on $L/K$ associated with $(G,\cdot,\circ)$, the Hopf--Galois correspondence is surjective.
\end{remark}

We focus now our attention on Hopf--Galois structures associated with bi-skew braces. In this case, the gamma functions take values in the automorphisms of the Galois group $(G, \circ)$, so we easily derive the following fact.
\begin{lemma}
Consider a Hopf--Galois structure on $L/K$ such that the associated skew brace $(G,\cdot,\circ)$ is a bi-skew brace. Let $G'$ be a characteristic subgroup of $(G,\circ)$. Then $L^{G'}$ is in the image of the Hopf--Galois correspondence for this structure.
\end{lemma}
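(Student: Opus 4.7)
The plan is to translate the statement into the language of skew braces via Corollary~\ref{corollary: main description} and then apply two facts already recorded in the preliminaries. By that corollary, the conclusion that $L^{G'}$ lies in the image of the Hopf--Galois correspondence is equivalent to showing that $G'$ is a left ideal of $(G,\cdot,\circ)$.

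First I would recall, from the subsection on skew braces, that a left ideal of $(G,\cdot,\circ)$ can equivalently be described as a subgroup of $(G,\circ)$ which is stable under the gamma function $\gamma$ of $(G,\cdot,\circ)$. Since $G'$ is assumed to be a characteristic subgroup of $(G,\circ)$, it is in particular a subgroup of $(G,\circ)$, so only the stability condition under $\gamma$ needs to be checked.

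Next I would invoke the characterisation of bi-skew braces quoted from~\cite{CCDC20}: a skew brace is a bi-skew brace if and only if its gamma function takes values in $\Aut(G,\circ)$. Applying this to our hypothesis, $\gamma(\sigma)\in\Aut(G,\circ)$ for every $\sigma\in G$. Because $G'$ is characteristic in $(G,\circ)$, it is invariant under every automorphism of $(G,\circ)$, hence in particular $\lexp{G'}{\gamma(\sigma)}=G'$ for all $\sigma\in G$. This shows that $G'$ is a left ideal of $(G,\cdot,\circ)$, and the result follows from Corollary~\ref{corollary: main description}.

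There is no real obstacle here: the statement is a direct synthesis of the corollary with the two bi-skew brace facts already at our disposal. The only point worth emphasising is the choice of which equivalent definition of left ideal to use (subgroup of $(G,\circ)$, not of $(G,\cdot)$), since this is what makes the characteristic hypothesis on $(G,\circ)$ fit with the range of $\gamma$ being in $\Aut(G,\circ)$.
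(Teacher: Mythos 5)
Your proposal is correct and matches the paper's (implicit) argument exactly: the paper introduces the lemma with the observation that for bi-skew braces the gamma function lands in $\Aut(G,\circ)$, so a characteristic subgroup of $(G,\circ)$ is $\gamma$-stable, hence a left ideal, and Corollary~\ref{corollary: main description} finishes it. Your emphasis on using the ``subgroup of $(G,\circ)$'' form of the left-ideal definition is precisely the right point to make explicit.
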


\begin{corollary}\label{corollary: cyclic}
	Suppose that $(G,\circ)$ is a cyclic group, and consider a Hopf--Galois structure on $L/K$ such that the associated skew brace $(G,\cdot,\circ)$ is a bi-skew brace.
	 Then the Hopf--Galois correspondence for this structure is surjective. 
\end{corollary}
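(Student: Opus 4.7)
The plan is to combine Corollary~\ref{corollary: main description} with the lemma that immediately precedes Corollary~\ref{corollary: cyclic}. By Corollary~\ref{corollary: main description}, proving surjectivity of the Hopf--Galois correspondence amounts to showing that every subgroup of the Galois group $(G,\circ)$ is a left ideal of the associated skew brace $(G,\cdot,\circ)$, and this will clearly be the main structural input.

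The key observation is that in a cyclic group every subgroup is characteristic: for each divisor $d$ of $|G|$ there is exactly one subgroup of $(G,\circ)$ of order $d$, hence it is stable under every automorphism of $(G,\circ)$. First I would record this standard fact. Then, since $(G,\cdot,\circ)$ is assumed to be a bi-skew brace, the gamma function $\gamma$ of $(G,\cdot,\circ)$ takes values in $\Aut(G,\circ)$, as recalled in section~\ref{sec: hopfgalois skewbraces}. Therefore every subgroup of $(G,\circ)$ is invariant under $\gamma$, and such a subgroup, being simultaneously a subgroup of $(G,\circ)$ and stable under $\gamma$, is precisely a left ideal of $(G,\cdot,\circ)$ (using the characterisation of left ideals via either group operation).

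Combining these two steps, every subgroup of $(G,\circ)$ is a left ideal of $(G,\cdot,\circ)$, and one concludes by Corollary~\ref{corollary: main description} that the Hopf--Galois correspondence for the given structure is surjective. Alternatively, one may simply invoke the preceding lemma directly: every intermediate field of $L/K$ has the form $L^{G'}$ for some subgroup $G'$ of $(G,\circ)$ by ordinary Galois theory, every such $G'$ is characteristic in $(G,\circ)$ by cyclicity, and hence each $L^{G'}$ lies in the image of the Hopf--Galois correspondence.

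There is no real obstacle here; the corollary is essentially a formal consequence of the preceding lemma once one remarks that cyclic groups have only characteristic subgroups. The proof should be essentially one or two sentences long.
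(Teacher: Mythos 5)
Your proposal is correct and follows exactly the route the paper intends: since every subgroup of a cyclic group is characteristic, the preceding lemma (equivalently, the fact that the gamma function of a bi-skew brace lands in $\Aut(G,\circ)$, so every subgroup of $(G,\circ)$ is a left ideal) combined with Corollary~\ref{corollary: main description} gives surjectivity immediately. No gaps; this matches the paper's (implicit, one-line) proof.
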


\begin{example}
	Suppose that $(G,\circ)$ is cyclic of order $8$. As shown in~\cite{Rum07b}, there exists a skew brace $(G,\circ,\cdot)$ with $(G,\cdot)\cong Q_8$, the quaternion group. A straightforward calculation shows that $(G,\circ,\cdot)$ is a bi-skew brace. We conclude by Corollary~\ref{corollary: cyclic} that for the Hopf--Galois structure on $L/K$ associated with the skew brace $(G,\cdot,\circ)$, the Hopf--Galois correspondence is surjective.  
\end{example}

We remark that for a Hopf--Galois structure on $L/K$ associated with a bi-skew brace $(G,\cdot,\circ)$, the Hopf--Galois correspondence is surjective if and only if $\gamma(\si)$ is a \emph{power automorphism} of $(G,\circ)$ for all  $\si\in G$, that is, $\lexp{\ta}{\gamma(\si)}$ is a power of $\ta$ in $(G,\circ)$ for all $\ta\in G$. Indeed, the power automorphisms of $(G,\circ)$ are precisely the automorphisms of $(G,\circ)$ that map every subgroup of $(G,\circ)$ to itself. 
\begin{example}\label{example: directsemidirect}
Suppose that $(G,\circ)$ is the direct product of an abelian group $A$ and the cyclic group $C_2$ of order $2$. Denote by $\alpha$ the action of $C_2$ on $A$ via inversion, and consider the semidirect product  $(G,\cdot)=A\rtimes C_2$ with respect to this action. Then $(G,\cdot,\circ)$ is a bi-skew brace; see~\cite[Examples 1.4 and 1.5]{GV17}. Here the gamma function of $(G,\cdot,\circ)$ is given as follows:
	\begin{equation*}
		\lexpp{a,b}{\gamma(c,d)}=(\lexpp{a}{\alpha(d^{-1})},b),
	\end{equation*}
	which is either equal to $(a,b)$ or to $\overline{(a,b)}$. In particular, $\gamma(c,d)$ is a power automorphism of $(G,\circ)$, and we conclude that for the Hopf--Galois structure on $L/K$ associated with $(G,\cdot,\circ)$, the Hopf--Galois correspondence is surjective. 
\end{example}

We deal now with bi-skew braces $(G,\cdot,\circ)$ whose gamma functions have values in the inner automorphism group of $(G,\circ)$; these skew braces have been recently studied in~\cite{Koc21,CS21,Koc22,CS22a-p,ST23}.
Denote by $Z(G)$ the centre of $(G,\circ)$ and by $N(G)$ the \emph{norm} of $(G,\circ)$, that is, the intersection of the normalisers of the subgroups of $(G,\circ)$. It is clear that $\iota_{\circ}(\si)$ is a power automorphism of $(G,\circ)$ if and only if $\si\in N(G)$.

We can apply this fact to obtain Hopf--Galois structures on $L/K$ for which the Hopf--Galois correspondence is surjective, as follows. Given a group homomorphism $\psi\colon (G,\circ)\to N(G)/Z(G)$, define \begin{equation*}
		\si\cdot_{\psi}\ta=\si\circ\lexp{\ta}{\iota_{\circ}(\psi(\si))}=\si\circ\psi(\si)\circ \ta\circ\overline{\psi(\si)};
	\end{equation*}
	here by $\psi(\si)$ we denote any element in the coset $\psi(\si)$ in $N(G)/Z(G)$, with a little abuse of notation justified by the fact that if $\ta\in Z(G)$, then $\iota_{\circ}(\ta)=\id$.
\begin{theorem}\label{theorem: norm}
	For all group homomorphisms $\psi\colon (G,\circ)\to N(G)/Z(G)$, we have that  $(G,\cdot_{\psi},\circ)$ is a bi-skew brace, and for the Hopf--Galois structure on $L/K$ associated with $(G,\cdot_{\psi},\circ)$, the Hopf--Galois correspondence is surjective. 
\end{theorem}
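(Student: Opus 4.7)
The plan is to show that $(G, \cdot_\psi, \circ)$ is a bi-skew brace whose gamma function takes values in the inner automorphism group of $(G, \circ)$, and then invoke the power-automorphism criterion recorded immediately before the statement.

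First I would introduce the map $\alpha \colon (G, \circ) \to \Aut(G, \circ)$ defined by $\alpha(\si) = \iota_\circ(\psi(\si))$, where $\psi(\si)$ is any lift in $N(G)$; this is well defined because $Z(G)$ acts trivially by conjugation in $(G, \circ)$, and $\alpha$ is a group homomorphism because both $\psi$ and $\iota_\circ$ are. Its image lies in the subgroup $\iota_\circ(N(G))$ of $\Aut(G, \circ)$, which is isomorphic to $N(G)/Z(G)$ and hence abelian by the classical group-theoretic fact that $[N(G), N(G)] \subseteq Z(G)$.

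Next, I would observe that the formula for $\cdot_\psi$ can be rewritten as $\si \cdot_\psi \ta = \si \circ \alpha(\si)(\ta)$, so the construction falls into the framework of bi-skew braces arising from group homomorphisms with abelian image developed in~\cite{Koc21, CS22a-p}. Appealing to (or verifying directly) that framework yields that $(G, \cdot_\psi, \circ)$ is a bi-skew brace. The one verification with any real content is associativity of $\cdot_\psi$, which reduces to the identity $\alpha(\alpha(\si)(\ta)) = \alpha(\ta)$: since $\alpha(\si)(\ta)$ and $\ta$ differ by conjugation by an element of $N(G)$, and $\alpha$ factors through the abelian quotient $N(G)/Z(G)$ via $\iota_\circ$, this identity is immediate.

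Finally, solving $\si \cdot_\psi \lexp{\ta}{\gamma(\si)} = \si \circ \ta$ for the gamma function gives $\gamma(\si) = \alpha(\si)^{-1} = \iota_\circ(\overline{\psi(\si)})$. For each $\si$, the element $\overline{\psi(\si)}$ has a representative in $N(G)$, so conjugation by it in $(G, \circ)$ preserves every subgroup of $(G, \circ)$, making $\gamma(\si)$ a power automorphism of $(G, \circ)$. The remark preceding the theorem then yields the surjectivity of the Hopf--Galois correspondence for the structure associated with $(G, \cdot_\psi, \circ)$. The main obstacle is the skew brace verification, whose essential ingredient is the abelianness of $N(G)/Z(G)$; once this classical fact is in hand, the remainder of the argument is a mechanical unwinding of the definitions of $\cdot_\psi$ and the gamma function.
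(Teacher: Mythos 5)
Your proposal is correct and follows essentially the same route as the paper: both rest on the abelianness of $N(G)/Z(G)$ (Schenkman's theorem, the main theorem of \cite{Sch60}) to invoke the construction of bi-skew braces from homomorphisms with abelian image (the paper cites \cite[Theorem 6.6]{ST23}), identify the gamma function as $\gamma(\si)=\iota_{\circ}(\overline{\psi(\si)})$, and conclude via the power-automorphism criterion. Your extra verification that associativity reduces to $\alpha(\alpha(\si)(\ta))=\alpha(\ta)$ is a correct unpacking of the cited framework, not a different argument.
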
 

\begin{proof}
	Let $\psi\colon (G,\circ)\to N(G)/Z(G)$ be a group homomorphism. By the main theorem of~\cite{Sch60}, the quotient $N(G)/Z(G)$ is abelian, so we can apply~\cite[Theorem 6.6]{ST23} to derive that $(G,\cdot_{\psi},\circ)$ is a bi-skew brace and the gamma function of $(G,\cdot_{\psi},\circ)$ is given by $\gamma(\si)=\iota_{\circ}(\overline{\psi(\si)})$.  In particular, the gamma function of $(G,\cdot_{\psi},\circ)$ is given by conjugation by elements of $N(G)$ in $(G,\circ)$, so by power automorphisms of $(G,\circ)$, and therefore we obtain our assertion. 
\end{proof}
\begin{remark}
	Note that distinct group homomorphisms $(G,\circ)\to N(G)/Z(G)$ in Theorem~\ref{theorem: norm} yield distinct operations, so distinct Hopf--Galois structures on $L/K$. 
\end{remark}

\begin{example}\label{example: quaternion}
	Suppose that $(G,\circ)=Q_8$, the quaternion group of order $8$. There are 22 Hopf--Galois structures on $L/K$, and 6 of them are of cyclic type; see~\cite[Table 2]{SV18}. As $(G,\circ)$ is Hamiltonian, we derive that $N(G)=G$, so $N(G)/Z(G)\cong C_2\times C_2$. Since there are $16$ distinct group homomorphisms
	\begin{equation*}
		Q_8\to C_2\times C_2,
	\end{equation*} we obtain $16$ distinct Hopf--Galois structures on $L/K$ for which the Hopf--Galois correspondence is surjective. We find indeed all the Hopf--Galois structures on $L/K$ except for the $6$ of cyclic type, for which the Hopf--Galois correspondence is not surjective by Remark~\ref{remark: not enough subgroups}. 
\end{example}

\begin{example}
	Suppose that $(G,\circ)$ is the nonabelian group of order $p^3$ and exponent $p^2$, with $p$ odd prime. An easy reasoning implies that $N(G)$ is the elementary abelian subgroup of $(G,\circ)$ of order $p^2$, while the centre is cyclic of order $p$. As there are $p^2$ distinct group homomorphisms
	\begin{equation*}
		(G,\circ)\to C_p,
	\end{equation*}
	we obtain $p^2$ distinct Hopf--Galois structures on $L/K$ for which the Hopf--Galois correspondence is surjective.
\end{example}

The following result, whose proof is immediate, shows that the behaviour of the canonical nonclassical structure can also be assumed by other Hopf--Galois structures. 
\begin{proposition}\label{proposition: canonical nonclassical}
	Consider a Hopf--Galois structure on $L/K$ such that associated skew brace $(G,\cdot,\circ)$ is a bi-skew brace with gamma function $\gamma\colon (G,\circ)\to \Inn(G,\circ)$. Then every normal intermediate field $K$ of $L/K$ is in the image of the Hopf--Galois correspondence for this structure.
	
	Moreover, if $\gamma\colon (G,\circ) \to \Inn(G,\circ)$ is surjective, then the image of the Hopf--Galois correspondence consists precisely of the normal intermediate fields of $L/K$. 
\end{proposition}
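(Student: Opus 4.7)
The plan is to reduce both assertions to the characterisation of the image of the Hopf--Galois correspondence given in Corollary~\ref{corollary: main description}: the intermediate field $L^{G'}$ attached to a subgroup $G'$ of $(G,\circ)$ lies in the image of the Hopf--Galois correspondence exactly when $G'$ is a left ideal of $(G,\cdot,\circ)$. Since $(G,\cdot,\circ)$ is assumed to be a bi-skew brace, the left ideals of $(G,\cdot,\circ)$ coincide with those of $(G,\circ,\cdot)$ (as recalled at the end of the skew brace subsection), so we may freely regard a candidate $G'$ as a subgroup of $(G,\circ)$ and simply ask that it be stable under $\gamma(\si)$ for every $\si\in G$.

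For the first statement, recall that a normal intermediate field of $L/K$ corresponds under the Galois correspondence to a normal subgroup $G'$ of $(G,\circ)$. By hypothesis $\gamma(\si)\in\Inn(G,\circ)$ for every $\si$, so $\gamma(\si)$ acts on $G$ as $\iota_{\circ}(\tau)$ for some $\tau\in G$. A normal subgroup of $(G,\circ)$ is stable under every such inner automorphism, hence stable under every $\gamma(\si)$, and therefore is a left ideal of $(G,\cdot,\circ)$. By Corollary~\ref{corollary: main description}, $L^{G'}$ lies in the image of the Hopf--Galois correspondence.

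For the second (and converse-type) statement, suppose that $\gamma\colon (G,\circ)\to\Inn(G,\circ)$ is surjective, and let $L^{G'}$ be any intermediate field in the image of the Hopf--Galois correspondence, so that $G'$ is a left ideal. Then $G'$ is stable under $\gamma(\si)$ for every $\si\in G$; by surjectivity of $\gamma$, this means $G'$ is stable under every inner automorphism of $(G,\circ)$, which is precisely the condition that $G'$ be normal in $(G,\circ)$. Hence $L^{G'}$ is a normal intermediate field of $L/K$, and combined with the first part we conclude that the image of the Hopf--Galois correspondence consists exactly of the normal intermediate fields.

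There is no real obstacle here; the only subtlety is to be explicit about why the bi-skew brace hypothesis is what allows us to exchange \emph{subgroup of} $(G,\cdot)$ for \emph{subgroup of} $(G,\circ)$ in the definition of a left ideal, so that the Galois-theoretic and skew-brace-theoretic conditions can be compared directly.
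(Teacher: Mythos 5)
Your proof is correct and is precisely the ``immediate'' argument the paper intends (the paper states the result without a written proof): normal subgroups of $(G,\circ)$ are stable under all inner automorphisms, hence under every $\gamma(\si)$, so they are left ideals and Corollary~\ref{corollary: main description} applies; conversely, when $\gamma$ surjects onto $\Inn(G,\circ)$, stability under all $\gamma(\si)$ forces normality in $(G,\circ)$. One minor remark: the exchange of ``subgroup of $(G,\cdot)$'' for ``subgroup of $(G,\circ)$'' in the definition of a left ideal is valid for \emph{every} skew brace (as noted where left ideals are defined), so you do not need the bi-skew brace hypothesis or the coincidence of left ideals of $(G,\cdot,\circ)$ and $(G,\circ,\cdot)$ for that step --- the bi-skew brace property is in any case automatic here, since $\gamma$ is assumed to take values in $\Inn(G,\circ)\subseteq\Aut(G,\circ)$.
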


\begin{example}
	Consider the canonical nonclassical structure, with associated skew brace $(G,\circ_{\op},\circ)$. Here $\gamma(\si)=\iota_{\circ}(\si)$ for all $\si\in G$. Applying Proposition~\ref{proposition: canonical nonclassical}, we recover 
	 the well-known property of the canonical nonclassical structure. 
\end{example}

\begin{example}
	Suppose that $(G,\circ)$ is nilpotent of class two, and define
	\begin{equation*}
		\si\cdot \ta=\si\circ \lexp{\ta}{\iota_{\circ}(\si)} =\si\circ \si\circ \ta\circ \overline\si.
	\end{equation*}
	Then by~\cite[Proposition 5.6]{CS22a-p}, we have that $(G,\cdot,\circ)$ is a bi-skew brace and the gamma function of $(G,\cdot,\circ)$ is given by $\gamma(\si)=\iota_{\circ}(\overline{\si})$. By Proposition~\ref{proposition: canonical nonclassical}, we derive that for the associated Hopf--Galois structure on $L/K$, the image of the Hopf--Galois correspondence consists precisely of the normal intermediate fields of $L/K$.
	
	It is easy to see that if there exists $\si\in G$ such that $\si\circ\si$ is not in the centre of $(G,\circ)$, then the Hopf--Galois structure we find is different from the canonical nonclassical structure. This holds, for examples, for the Heisenberg group of order $p^3$, with $p$ an odd prime. 
\end{example}

We study now a question posed in~\cite{Chi21}. Let $L_1/K_1$ be a finite Galois extension of fields with Galois group $(G,\circ)$, and consider a Hopf--Galois structure on $L_1/K_1$, with associated skew brace $(G,\cdot,\circ)$. We can rewrite the \emph{Hopf--Galois correspondence ratio}, defined as the ratio of the number intermediate fields of $L_1/K_1$ in the image of the Hopf--Galois correspondence to the number of intermediate fields of $L_1/K_1$, as follows:

\begin{equation*}
	GC(L_1/K_1,L_1[G,\cdot]^{(G,\circ)})=\frac{|\{\text{left ideals of $(G,\cdot,\circ)$}\}|}{|\{\text{subgroups of $(G,\circ)$}\}|}.
\end{equation*}

Suppose in addition that $(G,\cdot,\circ)$ is a bi-skew brace, and let $L_2/K_2$ be a finite Galois extension of fields with Galois group $(G,\cdot)$. The skew brace $(G,\circ,\cdot)$ is associated with a Hopf--Galois structure on $L_2/K_2$. Are these two Hopf--Galois structures related in some way?

The next result follows immediately from the facts that the lattices of left ideals of $(G,\cdot,\circ)$ and $K_1$-sub Hopf algebras of $L_1[G,\cdot]^{(G,\circ)}$ are isomorphic, and the left ideals of $(G,\cdot,\circ)$ and $(G,\circ,\cdot)$ coincide.
\begin{theorem}
The following facts hold:
	\begin{itemize}
	    \item The lattices of $K_1$-sub Hopf algebras of $L_1[G,\cdot]^{(G,\circ)}$ and $K_2$-sub Hopf algebras of $L_2[G,\circ]^{(G,\cdot)}$ are isomorphic.
		\item There is the same number of intermediate fields in the images of the Hopf--Galois correspondence for the Hopf--Galois structure on $L_1/K_1$ associated with $(G,\cdot,\circ)$ and the Hopf--Galois structure on $L_2/K_2$ associated with $(G,\circ,\cdot)$.
		\item The following equality holds: 
				\begin{equation*}
					\frac{GC(L_1/K_1,L_1[G,\cdot]^{(G,\circ)})}{GC(L_2/K_2,L_2[G,\circ]^{(G,\cdot)})}=\frac{|\{\text{subgroups of $(G,\cdot)$}\}|}{|\{\text{subgroups of $(G,\circ)$}\}|}.
				\end{equation*} 
				In particular, the ratio between the two Hopf--Galois correspondence ratios is constant and depends only on the isomorphism classes of the Galois groups. 
	\end{itemize}
\end{theorem}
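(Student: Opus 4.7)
The plan is to assemble the three bullets from two ingredients already recorded in the excerpt: the identification of $K$-sub Hopf algebras in a Hopf--Galois structure with left ideals of the associated skew brace, and the fact that, for a bi-skew brace, the left ideals of $(G,\cdot,\circ)$ and of $(G,\circ,\cdot)$ coincide as subsets of $G$ (this is \cite[Lemma 3.1]{ST23}, recalled at the end of the skew brace preliminaries).

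For the first bullet, I would apply the sub-Hopf-algebra classification proved in Section~\ref{sec: main} to each Hopf--Galois structure separately: the lattice of $K_1$-sub Hopf algebras of $L_1[G,\cdot]^{(G,\circ)}$, ordered by inclusion, is isomorphic to the lattice of left ideals of $(G,\cdot,\circ)$ (via $H'\mapsto H'\cap G$ in the natural sense, with inverse $G'\mapsto L_1[G',\cdot]^{(G,\circ)}$), and analogously for $L_2[G,\circ]^{(G,\cdot)}$ with left ideals of $(G,\circ,\cdot)$. By the bi-skew brace assumption, these two sets of left ideals are literally the same collection of subsets of $G$, with identical inclusion order, and composition of the two lattice isomorphisms yields the desired isomorphism.

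For the second bullet, I would invoke injectivity of the Hopf--Galois correspondence: the image of that correspondence is in bijection with the set of $K$-sub Hopf algebras of the relevant Hopf algebra, so the equality of cardinalities proved in the first bullet transfers to the sets of intermediate fields in the images. For the third bullet, using classical Galois theory to identify the total number of intermediate fields of $L_i/K_i$ with the number of subgroups of the corresponding Galois group, one can rewrite
\begin{equation*}
GC(L_1/K_1,L_1[G,\cdot]^{(G,\circ)})=\frac{|\{\text{left ideals of }(G,\cdot,\circ)\}|}{|\{\text{subgroups of }(G,\circ)\}|},
\end{equation*}
and analogously with roles swapped for the second extension. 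Dividing these two expressions, the common numerator (left ideals coincide by \cite[Lemma 3.1]{ST23}) cancels, producing the stated ratio of subgroup counts, which depends only on the isomorphism classes of $(G,\cdot)$ and $(G,\circ)$.

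There is no real obstacle here: the argument is essentially bookkeeping once the two preliminary results are in hand. The only point requiring a moment of care is to check that the bijection between sub Hopf algebras and left ideals is an isomorphism of \emph{lattices} (i.e.\ respects inclusion), which is immediate from the explicit form $G'\mapsto L[G',\cdot]^{(G,\circ)}$, so the lattice phrasing in the first bullet is justified rather than merely a set-theoretic bijection.
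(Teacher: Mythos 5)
Your proposal is correct and uses exactly the two ingredients the paper itself cites (the lattice isomorphism between $K$-sub Hopf algebras and left ideals from Section~3, and the coincidence of left ideals of $(G,\cdot,\circ)$ and $(G,\circ,\cdot)$ for bi-skew braces), assembled in the same way; the paper merely states that the result ``follows immediately'' from these facts, and your write-up is a faithful expansion of that one-line argument.
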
 

\begin{example}
	Suppose that $(G,\cdot,\circ)$ is the skew brace of Example~\ref{example: directsemidirect} with $p$ an odd prime and $A=C_p$. Then $(G,\cdot)$ is dihedral of order $2p$ and $(G,\circ)$ is cyclic of order $2p$. There are  $p+3$ subgroups of $(G,\cdot)$ and $4$ subgroups of $(G,\circ)$, and as every subgroup of $(G,\circ)$ is a left ideal of $(G,\cdot,\circ)$, we have the following equalities: 
	\begin{align*}
	GC(L_1/K_1,L_1[G,\cdot]^{(G,\circ)})&=1,\\
	GC(L_2/K_2,L_2[G,\circ]^{(G,\cdot)})&=\frac{4}{p+3},\\
		\frac{GC(L_1/K_1,L_1[G,\cdot]^{(G,\circ)})}{GC(L_2/K_2,L_2[G,\circ]^{(G,\cdot)})}&=\frac{p+3}{4}.
	\end{align*}
\end{example}

We conclude by focusing our attention on Hopf--Galois structures associated with skew braces that are not necessarily bi-skew braces. We begin with the following theorem, which was proved in~\cite{Koh19}. We provide a quick proof for convenience.

\begin{theorem}\label{theorem: kohl}
	Let $N$ be a group. If there exists $m$ such that the number of characteristic subgroups of order $m$ of $N$ is greater than the number of subgroups of order $m$ of $(G,\circ)$, then $L/K$ has no Hopf--Galois structures of type $N$.  
\end{theorem}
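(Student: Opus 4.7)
The plan is to argue by contradiction. Assume that $L/K$ admits a Hopf--Galois structure of type $N$. By Theorem~\ref{theorem: main}, this is equivalent to the existence of an operation $\cdot$ on $G$ such that $(G,\cdot,\circ)$ is a skew brace with $(G,\cdot)\cong N$. I would then transfer the problem from $N$ to $(G,\cdot)$: under this isomorphism, characteristic subgroups of $N$ of order $m$ correspond bijectively to characteristic subgroups of $(G,\cdot)$ of order $m$, so there are more of the latter than there are subgroups of order $m$ in $(G,\circ)$.

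Next I would invoke the observation recalled in the preliminaries that every characteristic subgroup of $(G,\cdot)$ is automatically a strong left ideal of $(G,\cdot,\circ)$ (because the gamma function takes values in $\Aut(G,\cdot)$), and in particular a left ideal. Since every left ideal of $(G,\cdot,\circ)$ is by definition also a subgroup of $(G,\circ)$, each such characteristic subgroup provides a subgroup of $(G,\circ)$ of order $m$, and distinct characteristic subgroups give distinct subgroups of $(G,\circ)$ because they are distinct as subsets of $G$.

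Combining these two steps, the number of subgroups of $(G,\circ)$ of order $m$ is at least the number of characteristic subgroups of $(G,\cdot)$ of order $m$, which equals the number of characteristic subgroups of $N$ of order $m$. This contradicts the hypothesis and completes the argument.

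There is essentially no serious obstacle here; the proof is a direct application of the new dictionary provided by Theorem~\ref{theorem: main} together with the elementary remark that characteristic subgroups of $(G,\cdot)$ are left ideals of the skew brace, which is precisely the feature that makes this reformulation so convenient.
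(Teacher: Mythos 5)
Your proposal is correct and follows exactly the paper's own argument: pass to a skew brace $(G,\cdot,\circ)$ with $(G,\cdot)\cong N$ via Theorem~\ref{theorem: main}, note that characteristic subgroups of $(G,\cdot)$ are left ideals and hence subgroups of $(G,\circ)$, and count. The extra details you spell out (transport of characteristic subgroups along the isomorphism, injectivity of the assignment) are exactly what the paper leaves implicit in its one-line proof.
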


\begin{proof}
	If $L/K$ has a Hopf--Galois structure of type $N$, then there exists a skew brace $(G,\cdot,\circ)$ with $(G,\cdot)\cong N$. As every characteristic subgroup of $(G,\cdot)$ is a left ideal of $(G,\cdot,\circ)$, so also a subgroup of $(G,\circ)$, we immediately derive a contradiction.
\end{proof}

On the contrary, if there exists a skew brace $(G,\cdot,\circ)$ such that the number of characteristic subgroups of $(G,\cdot)$ equals the number of subgroups of $(G,\circ)$, then the Hopf--Galois structure on $L/K$ associated with $(G,\cdot,\circ)$ assumes a nice behaviour. 
\begin{proposition}\label{proposition: same numbers}
Consider a Hopf--Galois structure on $L/K$, with associated skew brace $(G,\cdot,\circ)$. 
Suppose that the number of characteristic subgroups of $(G,\cdot)$ equals the number of subgroups of $(G,\circ)$. Then the Hopf--Galois correspondence for this structure is surjective. 
\end{proposition}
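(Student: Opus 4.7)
The plan is to reduce the statement to Corollary~\ref{corollary: main description} by squeezing the number of left ideals between two counts that are assumed equal.

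First, I would recall from the preliminaries that every characteristic subgroup of $(G,\cdot)$ is a strong left ideal of $(G,\cdot,\circ)$, hence in particular a left ideal. This gives an injection from the set of characteristic subgroups of $(G,\cdot)$ into the set of left ideals of $(G,\cdot,\circ)$. Second, every left ideal of $(G,\cdot,\circ)$ is also a subgroup of $(G,\circ)$ (as noted just after the definition of left ideal in section~\ref{sec: hopfgalois skewbraces}), so the inclusion map gives an injection from the set of left ideals of $(G,\cdot,\circ)$ into the set of subgroups of $(G,\circ)$. Combining these two, and using that all sets are finite, we obtain the chain of inequalities
\begin{equation*}
	|\{\text{characteristic subgroups of $(G,\cdot)$}\}|\le |\{\text{left ideals of $(G,\cdot,\circ)$}\}|\le |\{\text{subgroups of $(G,\circ)$}\}|.
\end{equation*}

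By hypothesis the outer terms are equal, so both inequalities are equalities. In particular, the second injection is a bijection, meaning that every subgroup of $(G,\circ)$ is a left ideal of $(G,\cdot,\circ)$. Applying Corollary~\ref{corollary: main description} then yields that the Hopf--Galois correspondence for this structure is surjective. There is no genuine obstacle here; the only thing to verify carefully is that characteristic subgroups of $(G,\cdot)$ are indeed invariant under the gamma function (which follows because $\gamma$ has image in $\Aut(G,\cdot)$), and this is already recorded in the preliminaries.
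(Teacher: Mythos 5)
Your proof is correct and follows essentially the same route as the paper: both arguments squeeze the set of left ideals between the characteristic subgroups of $(G,\cdot)$ and the subgroups of $(G,\circ)$, conclude from the equality of the outer cardinalities that every subgroup of $(G,\circ)$ is a left ideal, and then invoke Corollary~\ref{corollary: main description}. Your write-up just makes the counting step more explicit than the paper does.
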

\begin{proof}
	Every characteristic subgroup of $(G,\cdot)$ is a left ideal of $(G,\cdot,\circ)$, so also a subgroup of $(G,\circ)$. In particular, every subgroup of $(G,\circ)$ is a left ideal. 
\end{proof}

\begin{example}\label{example: cyclic odd}
	Suppose that $(G,\circ)$ is cyclic of odd prime power order, and consider a Hopf--Galois structure on $L/K$, with associated skew brace $(G,\cdot,\circ)$. By~\cite{Koh98}, also $(G,\cdot)$ is cyclic, so by Proposition~\ref{proposition: same numbers}, we conclude that the Hopf--Galois correspondence is surjective; we have recovered~\cite[Proposition 4.3]{Chi17}. 
\end{example}

\begin{example}\label{example: cyclic even}
	Suppose that $(G,\circ)$ is cyclic of order $2^m$, with $m\ge 1$, and consider a Hopf--Galois structure on $L/K$, with associated skew brace $(G,\cdot,\circ)$. We claim that the Hopf--Galois correspondence for this structure is surjective. 
	
	If $m=1, 2$, then by the explicit classification in~\cite[Proposition 2.4]{Bac15}, one can check that $(G,\cdot,\circ)$ is a bi-skew brace, so the result follows from Corollary~\ref{corollary: cyclic}. 
	
	Suppose now that $m\ge 3$. By~\cite[Theorem 6.1]{Byo07}, necessarily $(G,\cdot)$ is cyclic, the dihedral group, or the generalised quaternion group. With the unique exception of $m=3$ and $(G,\cdot)\cong Q_8$,  the numbers of characteristic subgroups of $(G,\cdot)$ and subgroups of $(G,\circ)$ coincide, so we conclude by Proposition~\ref{proposition: same numbers}.
	
	Finally, suppose that $m=3$ and $(G,\cdot)\cong Q_8$. Then the centre $Z$ of $(G,\cdot)$ is a characteristic subgroup of order $2$. It follows that $Z$ is an ideal of $(G,\cdot,\circ)$. By the case $m=2$, we know that $(G/Z,\cdot,\circ)$ has a left ideal $G'/Z$ of order $2$, which easily implies that $G'$ is a left ideal of $(G,\cdot,\circ)$ of order $4$.
\end{example}

\begin{remark}
	With the classification given in~\cite{Bac15}, it is easy to construct a skew brace $(G,\cdot,\circ)$ with $(G,\circ)$ cyclic of order $p^3$, where $p$ is a prime,  such that $(G,\cdot,\circ)$ is not a bi-skew brace. Thus Examples~\ref{example: cyclic odd} and~\ref{example: cyclic even} do not follow from Corollary~\ref{corollary: cyclic}. 
\end{remark}

We shall now conclude by characterising all the Galois extensions that behave like Examples~\ref{example: cyclic odd} and~\ref{example: cyclic even}. First, a useful lemma.
\begin{lemma}\label{lemma: not surjective}
	Suppose that $(G,\circ)$ is isomorphic to a direct product of groups $(A,\circ)$ and $(B,\circ)$, and that there exists a skew brace $(A,\cdot,\circ)$ such that not every subgroup of $(A,\circ)$ is a left ideal of $(A,\cdot,\circ)$. Then there exists a Hopf--Galois structure on $L/K$ for which the Hopf--Galois correspondence is not surjective.
\end{lemma}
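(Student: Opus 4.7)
The plan is to build an explicit skew brace on $G$ that realises the desired failure, and then invoke Corollary~\ref{corollary: main description}. Identify $(G,\circ)$ with the direct product $(A,\circ)\times (B,\circ)$ given by hypothesis. Form the direct product of skew braces
\begin{equation*}
    (G,\cdot,\circ)=(A,\cdot,\circ)\times (B,\circ,\circ),
\end{equation*}
that is, pair up the given skew brace on $A$ with the trivial skew brace on $B$. The multiplicative group of this direct product is $(A,\circ)\times (B,\circ)$, which coincides with the Galois group $(G,\circ)$, so by Theorem~\ref{theorem: main} we obtain a Hopf--Galois structure on $L/K$ whose associated skew brace is exactly $(G,\cdot,\circ)$.

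Next, pick a subgroup $A'$ of $(A,\circ)$ that fails to be a left ideal of $(A,\cdot,\circ)$; such an $A'$ exists by hypothesis. I claim that the subgroup $H=A'\times\{e\}$ of $(G,\circ)$ is not a left ideal of $(G,\cdot,\circ)$. This uses the fact, recalled in the preliminaries, that the gamma function of a direct product of skew braces is obtained componentwise from the gamma functions of the factors. Since the trivial skew brace has trivial gamma function, one has
\begin{equation*}
    \lexpp{(a,b)}{\gamma(\sigma,\tau)}=\bigl(\lexp{a}{\gamma_A(\sigma)},\,b\bigr),
\end{equation*}
where $\gamma_A$ denotes the gamma function of $(A,\cdot,\circ)$. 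If $H$ were invariant under $\gamma$, then for every $\sigma\in A$ and every $a\in A'$ we would have $\lexp{a}{\gamma_A(\sigma)}\in A'$, contradicting the choice of $A'$ as not being a left ideal of $(A,\cdot,\circ)$.

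Having produced a subgroup of $(G,\circ)$ that is not a left ideal of the associated skew brace, Corollary~\ref{corollary: main description} immediately yields that the Hopf--Galois correspondence for this structure is not surjective. The only substantive step is the componentwise description of the gamma function of a direct product, which is already recorded in Section~\ref{sec: hopfgalois skewbraces}; the rest is bookkeeping, so no real obstacle is expected.
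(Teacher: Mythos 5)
Your proof is correct and follows essentially the same route as the paper: transport the direct product skew brace $(A,\cdot,\circ)\times(B,\circ,\circ)$ to $G$ and apply Corollary~\ref{corollary: main description}. You merely spell out, via the componentwise gamma function, why $A'\times\{e\}$ fails to be a left ideal, a detail the paper leaves implicit.
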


\begin{proof}
	We can use the group isomorphism $(G,\circ)\cong (A,\circ)\times (B,\circ)$ to transport the structure of $(A,\cdot)\times (B,\circ)$ to $G$. We obtain a group operation $\cdot$ such that $(G,\cdot,\circ)$ is a skew brace isomorphic to $(A,\cdot,\circ)\times (B,\circ,\circ)$. By assumption, there exists a subgroup of $(G,\circ)$ which is not a left ideal of $(G,\cdot,\circ)$, so for the Hopf--Galois structure on $L/K$ associated with $(G,\cdot,\circ)$, the Hopf--Galois correspondence is not surjective.
\end{proof}

\begin{theorem}\label{theorem: childs}
	The following are equivalent:
	\begin{itemize}
		\item For all the Hopf--Galois structures on $L/K$, the Hopf--Galois correspondence is surjective.
		\item The Galois group $(G,\circ)$ is cyclic, and for all primes $p$ and $q$ dividing the order of $(G,\circ)$, we have that $p$ does not divide $q-1$.  
	\end{itemize}
\end{theorem}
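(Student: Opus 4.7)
The plan is to prove the two implications separately. For the forward direction $(\Leftarrow)$, I would first note that the prime condition forces $n = |G|$ to be either a power of $2$ or odd: if $2 \mid n$ and some odd prime $q$ divides $n$, then $2 \mid q-1$, contradicting the hypothesis. If $n = 2^a$, apply Example~\ref{example: cyclic even}. Otherwise $n = p_1^{a_1}\cdots p_k^{a_k}$ is odd, and the key step is to show that any skew brace $(G, \cdot, \circ)$ with $(G, \circ) = C_n$ decomposes as a direct product of skew braces on the Sylow subgroups of $(G, \circ)$. Each Sylow $p_i$-subgroup $P_i$ of $(G, \circ)$ is characteristic in $(G, \circ)$, and the prime condition, combined with the fact that the gamma function factors through $(G,\circ) = C_n$ and takes values in $\Aut(G, \cdot)$, should force $P_i$ to also be a subgroup of $(G, \cdot)$ and to be $\gamma$-invariant, hence an ideal. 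Once this is established, each factor $(P_i, \cdot, \circ)$ has surjective Hopf--Galois correspondence by Example~\ref{example: cyclic odd}, and by Remark~\ref{remark: direct product} so does the whole.

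For the backward direction $(\Rightarrow)$, I proceed by contrapositive in two cases. Suppose first that $(G, \circ)$ is not cyclic. If $(G, \circ)$ is non-abelian with some non-normal subgroup, then the canonical nonclassical structure has image equal to the normal intermediate fields of $L/K$ by Proposition~\ref{proposition: canonical nonclassical} applied to the almost trivial skew brace $(G, \circ_\op, \circ)$, which is a proper subset of the set of intermediate fields. If $(G, \circ)$ is Hamiltonian, one instead exhibits a skew brace of cyclic type on $(G, \circ)$; a cyclic group of order $|G|$ has strictly fewer subgroups than a Hamiltonian group of order $|G|$ (for instance, $C_8$ versus $Q_8$), so Remark~\ref{remark: not enough subgroups} applies. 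If $(G, \circ)$ is abelian non-cyclic, then the primary decomposition has a summand containing a direct factor $C_p \times C_p$; apply Lemma~\ref{lemma: not surjective} with the skew brace on $C_p \times C_p$ of type $C_{p^2}$, which has strictly fewer subgroups and hence is non-surjective by Remark~\ref{remark: not enough subgroups}.

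Suppose instead that $(G, \circ) = C_n$ is cyclic but $p \mid q - 1$ for some distinct primes $p, q$ dividing $n$. Writing $n = pq \cdot m$ with $\gcd(pq, m) = 1$, I would construct, analogously to the opposite of the skew brace in Example~\ref{example: directsemidirect} (with $A = C_q$ and $C_2$ replaced by $C_p$ acting non-trivially on $C_q$, which exists since $p \mid q-1$), a skew brace $(A, \cdot, \circ)$ with $(A, \cdot) = C_q \rtimes C_p$ and $(A, \circ) = C_{pq}$, in which the unique subgroup of order $p$ in $(A, \circ)$ fails to be $\gamma$-invariant. Applying Lemma~\ref{lemma: not surjective} with $B = C_m$ then yields a Hopf--Galois structure on $L/K$ whose Hopf--Galois correspondence is not surjective.

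The main obstacle is the decomposition claim in $(\Leftarrow)$, namely showing that under the prime condition the Sylow subgroups of $(G, \circ)$ are also subgroups of $(G, \cdot)$ and $\gamma$-invariant. This should follow from a coprime-order argument on the gamma function (its image is cyclic of order dividing $n$ inside $\Aut(G, \cdot)$) together with a careful analysis of how the $p$-parts of the two group operations relate, but the verification that the $p_i$-torsion of $(G,\circ)$ and $(G,\cdot)$ actually coincide as subsets is the delicate point.
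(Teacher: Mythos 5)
Your overall architecture matches the paper's (reduce the forward direction to the prime\hyphen power cases of Examples~\ref{example: cyclic odd} and~\ref{example: cyclic even} via a Sylow decomposition of the skew brace; run the converse by cases through Lemma~\ref{lemma: not surjective}), but several steps have genuine gaps. The most serious is the decomposition claim in the forward direction for $n$ odd. You need the Sylow subgroups of $(G,\circ)$ to be subgroups of $(G,\cdot)$, and no coprime\hyphen order argument on the gamma function delivers this: a priori the $\cdot$\hyphen order and the $\circ$\hyphen order of an element need not have the same prime support, and $\gamma$ takes values in $\Aut(G,\cdot)$, whose order you cannot control without first pinning down the isomorphism type of $(G,\cdot)$. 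The paper closes exactly this gap by invoking a substantive external theorem (\cite[Corollary 1.7]{Tsa21}): for $(G,\circ)$ cyclic of odd order, $(G,\cdot)$ must be $C_a\rtimes C_b$ with $a,b$ coprime; the hypothesis on the primes then forces this semidirect product to be direct, so $(G,\cdot)$ is cyclic, and only then does a general decomposition result for skew braces with both groups nilpotent (\cite[Corollary 4.3]{CSV19}) produce the direct product of Sylow sub\hyphen braces. The ``delicate point'' you flag is precisely where this input is required; it is not a verification one can supply by elementary means.

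In the converse, two of your witnesses do not exist. For $(G,\circ)$ abelian non\hyphen cyclic you propose a skew brace with $(A,\circ)\cong C_p\times C_p$ and $(A,\cdot)\cong C_{p^2}$; for odd $p$ there is no such skew brace, because any transitive subgroup of order $p^2$ of $\Hol(C_{p^2})$ contains an element $x\mapsto ux+1$ with $u\equiv 1 \pmod p$, whose $p$-th power is translation by $p$, so the subgroup is cyclic. Moreover $C_p\times C_p$ need not be a \emph{direct factor} of $(G,\circ)$ (consider $C_{p^2}\times C_p$), so Lemma~\ref{lemma: not surjective} would not apply even if the brace existed. The paper instead works with the whole $p$-primary part $C_{p^r}\times C_{p^s}$ and an explicit operation $(\si^i,\ta^j)\cdot(\si^a,\ta^b)=(\si^{i+a},\ta^{j+b+ia})$ under which $C_{p^r}\times\{1\}$ is visibly not $\cdot$\hyphen closed. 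Similarly, in the Hamiltonian case a cyclic\hyphen type skew brace on all of $(G,\circ)$ need not exist: already for $Q_8\times C_2$ one checks that $\Hol(C_{16})$ has no regular subgroup isomorphic to it. The fix, as in the paper, is to place the cyclic\hyphen type brace only on the $Q_8$ factor and again invoke Lemma~\ref{lemma: not surjective}. Your treatment of the case $p\mid q-1$ is essentially the paper's, provided you really do pass to the opposite brace: in the semidirect\hyphen product brace itself $\{1\}\times C_p$ \emph{is} a left ideal, and the failure occurs for the opposite because a subgroup that is a left ideal of both a skew brace and its opposite must be normal in $(G,\cdot)=C_q\rtimes C_p$, which $\{1\}\times C_p$ is not.
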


\begin{proof}

Suppose first that $(G,\circ)$ is cyclic of order $n$ and for all primes $p$ and $q$ dividing $n$, we have that $p$ does not divide $q-1$. Consider a Hopf--Galois structure on $L/K$, with associated skew brace $(G,\cdot,\circ)$. If $n$ is even, then $n$ has no odd prime divisors, so the result follows from Example~\ref{example: cyclic even}. 

If instead $n$ is odd, then by~\cite[Corollary 1.7]{Tsa21}, we have that $(G,\cdot)$ is isomorphic to a semidirect product of cyclic groups $C_a\rtimes C_b$, where $a$ and $b$ are coprime and $ab=n$. But by the assumption on the divisors of the order of $(G,\circ)$, this semidirect product is necessarily a direct product. In particular, $(G,\cdot)$ is cyclic, and we can apply~\cite[Corollary 4.3]{CSV19} to deduce that $(G,\cdot,\circ)$ is isomorphic to a direct product of skew braces of coprime odd prime power order. The assertion then follows from  Remark~\ref{remark: direct product} and Example~\ref{example: cyclic odd}.

Conversely, suppose that for all the Hopf--Galois structures on $L/K$, the Hopf--Galois correspondence is surjective. As this holds for the canonical nonclassical structure, $(G,\circ)$ is either abelian or Hamiltonian. We proceed by exclusion. 

 Suppose first that $(G,\circ)$ is Hamiltonian. Then there exists an abelian group $A$ such that $(G,\circ)$ is isomorphic to the direct product of $Q_8$ and $A$; see~\cite[Theorem 12.5.4]{Hal59}. As already mentioned, there exists a skew brace $(G',\cdot,\circ)$ where $(G',\circ)\cong Q_8$ and $(G',\cdot)$ is cyclic. 
 By applying Remark~\ref{remark: not enough subgroups} and Lemma~\ref{lemma: not surjective}, we derive a contradiction.  

We deduce that $(G,\circ)$ is necessarily abelian. Suppose that $(G,\circ)$ is not cyclic. Then there exists a prime $p$ such that $(G,\circ)$ is isomorphic to a direct product of the form  $C_{p^{r}}\times C_{p^{s}}\times A$, where $1\le  s \le  r$.  Write $\si$ for a generator of $C_{p^{r}}$ and $\ta$ for a generator of  $C_{p^{s}}$. In a slight variation of~\cite[Example 6.7]{ST23}, there exists a skew brace $(G',\cdot,\circ)$ such that $(G',\circ)$ equals $C_{p^{r}}\times C_{p^{s}}$ with the direct product operation and
\begin{equation*}
	(\si^i,\ta^j)\cdot (\si^a,\ta^b)=(\si^{i+a},\ta^{j+b+ia}). 
\end{equation*}
Note that the subgroup $C_{p^r}\times \{1\}$ of $(G',\circ)$ is not a subgroup of $(G',\cdot)$, so in particular it is not a left ideal of $(G',\cdot,\circ)$. Again by Lemma~\ref{lemma: not surjective}, we find a contradiction.

We deduce that $(G,\circ)$ is necessarily cyclic. Suppose that there exist primes $p$ and $q$ dividing the order of $(G,\circ)$ such that $p$ divides $q-1$. Let $(G',\circ)$ be the direct product of the Sylow $q$-subgroup $Q$ and the Sylow $p$-subgroup $P$ of $(G,\circ)$. By assumption on $p$ and $q$, we can construct a nontrivial semidirect product $(G',\cdot)$ of $Q$ and $P$. By~\cite[Example 1.5]{GV17}, we have that  $(G',\cdot,\circ)$ is a skew brace. Suppose that $\{1\}\times P$ is a left ideal of $(G',\cdot,\circ)$. Then $\{1\}\times P$ is not a left ideal of $(G',\cdot_{\op},\circ)$, because otherwise $\{1\}\times P$ would be normal subgroup of $(G',\cdot)$. As $(G,\circ)$ is isomorphic to the direct product of all its Sylow subgroups, we find a contradiction from Lemma~\ref{lemma: not surjective}.
\end{proof}

\bibliographystyle{amsalpha}
\bibliography{bib.bib}
\end{document}